\documentclass[final,leqno]{siamltex704}
\usepackage{amsmath}
\usepackage{graphicx}
\usepackage[notcite,notref]{showkeys}
\usepackage{mathrsfs}
\usepackage{graphics}
\usepackage{float}
\usepackage{amsfonts,amssymb}
\usepackage{dsfont}
\usepackage{pifont}
\usepackage{wrapfig} 
\usepackage{hyperref}
\usepackage{multirow}
\usepackage{color}
\numberwithin{equation}{section}
\usepackage{threeparttable}
\def\3bar{{|\hspace{-.02in}|\hspace{-.02in}|}}
\def\E{{\mathcal{E}}}
\def\T{{\mathcal{T}}}

\def\pT{{\partial T}}

\def\W{{\mathcal{W}}}

\def\bw{{\mathbf{w}}}
\def\bu{{\mathbf{u}}}
\def\bv{{\mathbf{v}}}

\def\bn{{\mathbf{n}}}

\def\bq{{\mathbf{q}}}

\def\be{{\mathbf{e}}}
\def\bw{{\mathbf{w}}}
\def\bf{{\mathbf{f}}}
\def\bQ{{\mathbf{Q}}}
\def\bphi{{\boldsymbol{\phi}}}

\def\bphi{{\boldsymbol{\phi}}}
\def\ljump{{[\![}}
\def\rjump{{]\!]}}

\newtheorem{algorithm}{Weak Galerkin Algorithm}[section]

\setlength{\parindent}{0.25in} \setlength{\parskip}{0.08in}

\title{Weak Galerkin Finite Element Methods for Quad-Curl Problems}

\begin{document}
 \author{
 Chunmei Wang \thanks{Department of Mathematics, University of Florida, Gainesville, FL 32611, USA (chunmei.wang@ufl.edu). The research of Chunmei Wang was partially supported by National Science Foundation Grants DMS-2136380 and DMS-2206332.}
 \and
 Junping Wang\thanks{Division of Mathematical
 Sciences, National Science Foundation, Alexandria, VA 22314
 (jwang@nsf.gov). The research of Junping Wang was supported in part by the
 NSF IR/D program, while working at National Science Foundation.
 However, any opinion, finding, and conclusions or recommendations
 expressed in this material are those of the author and do not
 necessarily reflect the views of the National Science Foundation.}
  \and
 Shangyou Zhang\thanks{Department of Mathematical Sciences,  University of Delaware, Newark, DE 19716, USA (szhang@udel.edu).  }
 }

\maketitle
\begin{abstract}
 This article introduces a weak Galerkin (WG) finite element method for quad-curl problems in three dimensions. It is proved that the proposed WG method is stable and accurate in an optimal order of  error estimates for the exact solution in discrete norms. In addition, an $L^2$ error estimate in an optimal order except the lowest orders $k=1, 2$ is derived for the WG solution. Some numerical experiments are conducted to verify the efficiency and accuracy of our WG method and furthermore a superconvergence has been observed from the numerical results.
\end{abstract}

\begin{keywords} 
 weak Galerkin, WG, finite element methods, quad-curl problem,  polyhedral partition.
\end{keywords}

\begin{AMS}
Primary, 65N30, 65N15, 65N12, 74N20; Secondary, 35B45, 35J50,
35J35
\end{AMS}

\pagestyle{myheadings}

\section{Introduction}

In this paper we are concerned with the development of a weak Galerkin (WG) finite element method for the quad-curl problem in three dimensions which seeks $\bu$ such that
\begin{equation}\label{model} 
\begin{split}
 (\nabla \times)^4 \bu=&\bf, \qquad \text{in}\quad \Omega,\\
 \nabla\cdot\bu=&0, \qquad \text{in}\quad \Omega,\\
 \bu\times\bn=&0, \qquad \text{on}\quad \partial\Omega,\\
\nabla\times\bu\times\bn=&0, \qquad \text{on}\quad \partial\Omega,
 \end{split}
\end{equation}
for a given $\bf$ defined on a bounded domain $\Omega\subset \mathbb R^3$. 

The quad-curl problems arise in inverse electromagnetic scattering theory for nonhomogeneous
media \cite{13} and magneto-hydrodynamics equations \cite{55}.
Recently, some contributions have been made on the finite element methods for the quad-curl problems.  The conforming finite element spaces for the quad-curl problem have been recently constructed in  two dimensions (e.g. \cite{28,51}) and    in three dimensions (e.g. \cite{27,39,52}). \cite{30,55} proposed the nonconforming and low order finite element spaces for the quad-curl problems. \cite{sun2,49,53} proposed the mixed methods for the quad-curl problems. \cite{10} introduced a formulation using the Hodge decomposition for the quad-curl problems. \cite{25} introduced a discontinuous Galerkin scheme.   \cite{sun} proposed a novel weak Galerkin formulation using the conforming space for curl-curl problem as a nonconforming space for the quad-curl problem. \cite{50} analyzed a posteriori error analysis for the quad-curl problems in two dimensions.  \cite{54} introduced a virtual element method for the quad-curl problems in two dimensions. \cite{cao} introduced a decoupled formulation for the quad-curl problems where  the a priori and a posteriori error were analyzed. 

  In the literature, the existing WG methods for quad-curl problems  proposed  in \cite{sun}  were curl-conforming  and  based on tetrahedral partitions. However, our WG method is not necessary to be curl-conforming and is based on any polyhedral partitions.   Our WG numerical method  (\ref{32})-(\ref{2})   has provided an accurate and reliable numerical solution for the quad-curl system (\ref{model}) in an optimal order of error estimates in discrete norms and in an optimal order of $L^2$ error estimates except the lowest two orders $k=1, 2$. In addition, we have observed some superconvergence phenomena from numerical experiments. 

The paper is organized as follows. Section 2 is devoted to the derivation of a weak formulation for the quad-curl system \eqref{model}. Section 3  briefly introduces the discrete weak gradient operator and  the discrete weak curl-curl  operator. Section 4 is dedicated to the presentation of the weak Galerkin  algorithm for the quad-curl problem and a discussion of the solution existence and uniqueness for the WG scheme. In Section 5, the error equations are derived for the WG scheme. Section 6 establishes an optimal order of error estimates in discrete norms for  the WG approximation. In Section 7, the $L^2$ error estimate for the WG solution is established in an optimal order except the lowest two orders $k=1, 2$ under some regularity assumptions. Section 8 demonstrates the numerical performance of the WG algorithm through some test examples. 

We follow the standard notations for Sobolev spaces and norms defined on a given open and bounded domain $D\subset \mathbb{R}^3$ with Lipschitz continuous boundary. Denote by $\|\cdot\|_{s,D}$, $|\cdot|_{s,D}$ and $(\cdot,\cdot)_{s,D}$ the norm, seminorm and inner product in the Sobolev space $H^s(D)$ for any $s\ge 0$. The space $H^0(D)$ coincides with $L^2(D)$ (i.e., the space of square integrable functions), for which the norm and the inner product are denoted by $\|\cdot \|_{D}$ and $(\cdot,\cdot)_{D}$. When $D=\Omega$ or when the domain of integration is clear from the context, we shall drop the subscript $D$ in the norm and the inner product notation.

\section{A Weak Formulation}\label{Section:2}

Let $s>0$ be an integer. We first introduce
$$
H(curl^s; \Omega)=\{\bu\in [L^2(\Omega)]^3: (\nabla\times)^j\bu\in [L^2(\Omega)]^3, j=1, \cdots, s\} 
$$
with the associated inner product 
  $
 (\bu, \bv)_{H(curl^s; \Omega)}=(\bu, \bv)+\sum_{j=1}^s ((\nabla\times)^j\bu,(\nabla\times)^j\bv)
  $
  and the norm $
 \|\bu\|_{H(curl^s; \Omega)}= (\bu, \bu)^{\frac{1}{2}}_{H(curl^s; \Omega)}$. 
 We  further introduce 
$$
H_0(curl; \Omega):=\{\bu\in H(curl; \Omega): \bn\times \bu=0\ \text{on}\  \partial\Omega\},
$$
$$
H_0(curl^2; \Omega):=\{\bu\in H(curl^2; \Omega): \bn\times \bu=0\ \text{and}\ \nabla\times\bu\times \bn=0\ \text{on}\ \partial \Omega\}.
$$
We   introduce 
$$
H(div; \Omega)=\{\bu\in [L^2(\Omega)]^3: \nabla\cdot\bu \in L^2(\Omega)\},
$$
with the associated inner product $(\bu, \bv)_{H(div; \Omega)}=(\bu, \bv)+(\nabla\cdot\bu, \nabla\cdot\bv)$ and the norm $\|\bu\|_{H(div; \Omega)}= (\bu, \bu)^{\frac{1}{2}}_{H(div; \Omega)} $. We further introduce
$$
H(div^0; \Omega)=\{\bu\in H(div;\Omega): \nabla\cdot\bu=0\ 
 \text{in} \ \Omega\}.
$$

Using the usual integration by parts, we are ready to propose the weak formulation of the quad-curl problem \eqref{model} as follows: Given $\bf\in H(div^0; \Omega)$, find $(\bu; p)\in H_0(curl^2; \Omega)\times H_0^1(\Omega)$ such that
\begin{equation}\label{weakform}
\begin{split}
((\nabla\times)^2\bu, (\nabla\times)^2\bv)+(\bv, \nabla p)=&(\bf, \bv), \qquad \bv \in H_0(curl^2; \Omega),\\
-(\bu, \nabla q)=&0, \qquad \qquad \forall q\in H_0^1(\Omega).
\end{split}
\end{equation}

\begin{theorem} \cite{sun}
Given $\bf\in H(div^0; \Omega)$, the problem \eqref{weakform} has a unique solution $(\bu; p)\in H_0(curl^2; \Omega)\times H_0^1(\Omega)$. Furthermore, $p=0$ and $\bu$ satisfies
$$
\|\bu\|_{H(curl^2; \Omega)}\leq C\|\bf\|.
$$

\end{theorem}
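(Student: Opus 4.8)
The plan is to treat \eqref{weakform} as a saddle-point (mixed) problem and to verify the hypotheses of the Babu\v{s}ka--Brezzi theory. First I would introduce the bilinear forms $a(\bu,\bv)=((\nabla\times)^2\bu,(\nabla\times)^2\bv)$ on $H_0(curl^2;\Omega)\times H_0(curl^2;\Omega)$ and $b(\bv,q)=(\bv,\nabla q)$ on $H_0(curl^2;\Omega)\times H_0^1(\Omega)$, so that \eqref{weakform} reads: find $(\bu;p)$ with $a(\bu,\bv)+b(\bv,p)=(\bf,\bv)$ and $b(\bu,q)=0$. The kernel of $b(\cdot,\cdot)$ in the first slot is $Z=\{\bv\in H_0(curl^2;\Omega):(\bv,\nabla q)=0\ \forall q\in H_0^1(\Omega)\}$, which is precisely the divergence-free subspace $H_0(curl^2;\Omega)\cap H(div^0;\Omega)$ (for $\bv$ in this space, $\nabla\cdot\bv$ integrated against $H_0^1$ test functions vanishes, hence $\nabla\cdot\bv=0$, and conversely).

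The three ingredients to check are: (i) continuity of $a$ and $b$, which is immediate from Cauchy--Schwarz and the definitions of the $H(curl^2)$ and $H^1$ norms; (ii) coercivity of $a(\cdot,\cdot)$ on the kernel $Z$, i.e. $a(\bv,\bv)\ge\alpha\|\bv\|^2_{H(curl^2;\Omega)}$ for all $\bv\in Z$; and (iii) the inf-sup (LBB) condition $\sup_{\bv\in H_0(curl^2;\Omega)}\frac{b(\bv,q)}{\|\bv\|_{H(curl^2;\Omega)}}\ge\beta\|q\|_{1}$ for all $q\in H_0^1(\Omega)$. For (iii) I would, given $q\in H_0^1(\Omega)$, simply take the test function $\bv=\nabla q$; then $\nabla\times\bv=0$, so $\bv\in H_0(curl^2;\Omega)$ trivially with $\|\bv\|_{H(curl^2;\Omega)}=\|\nabla q\|$, and $b(\bv,q)=\|\nabla q\|^2$, while by the Poincar\'e--Friedrichs inequality $\|\nabla q\|\ge c\|q\|_1$; this yields the inf-sup constant. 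Ingredient (ii) is the crux: I need a Friedrichs-type estimate asserting that on divergence-free fields with the stated boundary conditions, $\|\bv\|+\|\nabla\times\bv\|$ is controlled by $\|(\nabla\times)^2\bv\|$. This is done in two applications of a vector Poincar\'e/Friedrichs inequality on $H_0(curl;\Omega)\cap H(div^0;\Omega)$ (valid on bounded Lipschitz domains, assuming the standard topological hypotheses on $\Omega$ that make $H^1$-regularity or at least compact embedding available): first $\|\bv\|\le C\|\nabla\times\bv\|$ for $\bv\in Z$, and then, setting $\bw=\nabla\times\bv$, one checks $\bw\in H_0(curl;\Omega)$ (this is exactly the content of the second boundary condition $\nabla\times\bu\times\bn=0$) and $\nabla\cdot\bw=0$ automatically, so $\|\nabla\times\bv\|=\|\bw\|\le C\|\nabla\times\bw\|=C\|(\nabla\times)^2\bv\|$; combining gives coercivity.

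With (i)--(iii) in hand, the Brezzi theorem delivers existence and uniqueness of $(\bu;p)\in H_0(curl^2;\Omega)\times H_0^1(\Omega)$ together with the a priori bound $\|\bu\|_{H(curl^2;\Omega)}+\|p\|_1\le C\|\bf\|_{(H_0(curl^2;\Omega))'}\le C\|\bf\|$, where the last step uses that $\bv\mapsto(\bf,\bv)$ is bounded on $L^2$ hence on $H_0(curl^2;\Omega)$. Finally, to see $p=0$: testing the first equation of \eqref{weakform} with $\bv=\nabla p\in H_0(curl^2;\Omega)$ gives $a(\bu,\nabla p)+(\nabla p,\nabla p)=(\bf,\nabla p)$; but $(\nabla\times)^2(\nabla p)=0$ kills the first term, and since $\bf\in H(div^0;\Omega)$ with $\nabla p$ admissible as a test function, integration by parts gives $(\bf,\nabla p)=-(\nabla\cdot\bf,p)=0$ (using $p\in H_0^1$); hence $\|\nabla p\|^2=0$, so $\nabla p=0$ and therefore $p=0$ since $p\in H_0^1(\Omega)$. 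The one place requiring genuine care is ingredient (ii): the Friedrichs inequalities used there hold under mild topological assumptions on $\Omega$ (e.g. simply connected with connected boundary, or more generally with the appropriate cohomology spaces trivial), and the verification that $\nabla\times\bv$ inherits the homogeneous tangential boundary condition must be made rigorous in the trace sense; this is precisely where the second boundary condition in \eqref{model} is essential, and everything else is routine.
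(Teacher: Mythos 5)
The paper does not actually prove this theorem --- it is quoted from \cite{sun} --- and your Babu\v{s}ka--Brezzi argument is precisely the standard proof given there: continuity, coercivity of $a(\cdot,\cdot)$ on the divergence-free kernel via two applications of the Friedrichs inequality for fields in $H_0(curl;\Omega)\cap H(div^0;\Omega)$ (noting that $\bw=\nabla\times\bv$ inherits $\bw\times\bn=0$ and is automatically divergence-free), the inf-sup condition via the test function $\bv=\nabla q$, and $p=0$ by testing with $\nabla p$ and using $\bf\in H(div^0;\Omega)$. The only caveat, which you correctly flag, is that the Friedrichs inequalities require mild topological hypotheses on $\Omega$ (e.g.\ simply connected with connected boundary); these are left implicit in the paper but hold for the domains considered.
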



\section{Weak Differential Operators}\label{Section:Hessian}
The principal differential operators in the weak formulation (\ref{weakform}) for the quad-curl problem (\ref{model}) are the gradient operator $\nabla$ and the curl-curl operator $(\nabla \times)^2$. We shall briefly review the discrete weak gradient operator \cite{wwconvdiff, wy3655} and define the discrete weak curl-curl operator.

Let $T$ be a polyhedral domain with boundary $\partial T$. A scalar-valued weak function on $T$ refers to $\sigma=\{\sigma_0,\sigma_b\}$ with $\sigma_0\in L^2(T)$ and $\sigma_b\in L^{2}(\partial T)$. Here $\sigma_0$ and $\sigma_b$ are used to represent the value of $\sigma$ in the interior and on the boundary of $T$. Note that $\sigma_b$  may not necessarily be the trace of $\sigma_0$   on $\partial T$. Denote by $\W(T)$ the space of scalar-valued weak functions on $T$:
\begin{equation}\label{2.1}
\W(T)=\{\sigma=\{\sigma_0,\sigma_b\}: \sigma_0\in L^2(T), \sigma_b\in
L^{2}(\partial T)\}.
\end{equation}
A vector-valued weak function on $T$ refers to a triplet $\bv=\{\bv_0,\bv_b, \bv_n\}$  where  $\bv_0$ and $\bv_b$ are used to represent the values of $\bv$ in the interior and on the boundary of $T$ and $\bv_n$ represents the value of $\nabla\times\bv$ on $\partial T$. Note that $\bv_b$ and $\bv_n$ may not necessarily be the traces of $\bv_0$ and $\nabla \times \bv_0$ on $\partial T$ respectively. Denote by $V(T)$ the space of vector-valued weak functions on $T$:
\begin{equation}
V(T)=\{\bv=\{\bv_0,\bv_b, \bv_n\}: \bv_0\in [L^2(T)]^3,  \bv_b\in
[L^{2}(\partial T)]^3, \bv_n\in [L^{2}(\partial T)]^3\}.
\end{equation}
 
The weak gradient of $\sigma\in \W(T)$, denoted by $\nabla_w \sigma$, is defined as a linear functional on $[H^1(T)]^3$ such that
\begin{equation*}
(\nabla_w  \sigma,\boldsymbol{\psi})_T=-(\sigma_0,\nabla \cdot \boldsymbol{\psi})_T+\langle \sigma_b,\boldsymbol{\psi}\cdot \textbf{n}\rangle_{\partial T},
\end{equation*}
for all $\boldsymbol{\psi}\in [H^1(T)]^3$.

The weak curl-curl operator of any $\bv\in V(T)$, denoted by $(\nabla\times)^2_{w}\bv$ is defined in the dual space of $H(curl^2; T)$, whose action on $\bq\in H(curl^2; T)$ is given by 
$$
((\nabla \times)^2_{w} \bv, \bq)_T=(\bv_0, (\nabla\times)^2 \bq)_T-\langle \bv_b\times\bn, \nabla\times\bq\rangle_{\partial T}-\langle \bv_n\times\bn, \bq\rangle_{\partial T}.
$$

Denote by $P_r(T)$ the space of polynomials on $T$ with degree no more than $r$. 

A discrete version of $\nabla_{w}\sigma$  for $\sigma\in \W(T)$, denoted by $\nabla_{w, r, T}\sigma$, is defined as a unique polynomial vector in $[P_r(T) ]^3$ satisfying
\begin{equation}\label{disgradient}
(\nabla_{w, r, T} \sigma, \boldsymbol{\psi})_T=-(\sigma_0, \nabla \cdot \boldsymbol{\psi})_T+\langle \sigma_b, \boldsymbol{\psi} \cdot \textbf{n}\rangle_{\partial T}, \quad\forall\boldsymbol{\psi}\in [P_r(T)]^3,
\end{equation}
 which, from the usual integration by parts, gives
 \begin{equation}\label{disgradient*}
 (\nabla_{w, r, T} \sigma, \boldsymbol{\psi})_T= (\nabla \sigma_0, \boldsymbol{\psi})_T-\langle \sigma_0- \sigma_b, \boldsymbol{\psi} \cdot \textbf{n}\rangle_{\partial T}, \quad\forall\boldsymbol{\psi}\in [P_r(T)]^3,
 \end{equation}
 provided that $\sigma_0\in H^1(T)$.

A discrete version of $(\nabla\times)^2_{w}\bv$ for $\bv\in V(T)$, denoted by $(\nabla\times)^2_{w, r, T}\bv$, is defined as a unique polynomial vector in $[P_r(T) ]^3$ satisfying
\begin{equation}\label{discurlcurl}
((\nabla \times)^2_{w, r, T} \bv, \bq)_T=(\bv_0, (\nabla\times)^2 \bq)_T-\langle \bv_b\times\bn, \nabla\times\bq\rangle_{\partial T}-\langle \bv_n\times\bn, \bq\rangle_{\partial T}, 
\end{equation}
 for any $\bq \in [P_r(T)]^3$.

\section{Weak Galerkin Algorithm}\label{Section:WGFEM}
Let ${\cal T}_h$ be a finite element partition of the domain $\Omega\subset\mathbb R^3$ consisting of polyhedra that are shape-regular \cite{wy3655}. Denote by ${\mathcal E}_h$ the set of all faces in ${\cal T}_h$ and  ${\mathcal E}_h^0={\mathcal E}_h \setminus
\partial\Omega$ the set of all interior faces. Denote by $h_T$ the meshsize of $T\in {\cal T}_h$ and $h=\max_{T\in {\cal T}_h}h_T$ the meshsize for the partition ${\cal T}_h$.

For any given integer $k\geq 1$, denote by
$W_k(T)$ the local discrete space of the scalar-valued weak functions given by
$$
W_k(T)=\{\{\sigma_0,\sigma_b\}:\sigma_0\in P_k(T),\sigma_b\in
P_k(e),e\subset \partial T\}.
$$
Furthermore, denote by
$V_k(T)$ the local discrete space of the vector-valued weak functions given by
$$
V_k(T)=\{\{\bv_0,\bv_b, \bv_n\}:\bv_0\in [P_k(T)]^3,  \bv_b\in
[P_k(e)]^3, \bv_n\in
[P_{k-1}(e)]^3, e\subset \partial T\}.
$$
Patching $W_k(T)$ over all the elements $T\in {\cal T}_h$
through a common value $\sigma_b$ on the interior interface $\E_h^0$, we arrive at the following scalar-valued 
weak finite element space, denoted by $W_h$; i.e.,
$$
W_h=\big\{\{\sigma_0, \sigma_b\}:\{\sigma_0, \sigma_b\}|_T\in W_k(T), \forall T\in {\cal T}_h \big\},
$$
and the subspace of $W_h$ with vanishing boundary values on $\partial\Omega$, denoted by $W_h^0$; i.e.,
\begin{equation}\label{W0}
W_h^0=\{\{\sigma_0, \sigma_b\}\in W_h: \sigma_b=0\ \text{on}\ \partial \Omega\}.
\end{equation}
Similarly,  patching $V_k(T)$ over all the elements $T\in {\cal T}_h$
through a common value $\bv_b$ on the interior interface $\E_h^0$, we arrive at the following vector-valued weak finite element space, denoted by $V_h$; i.e.,
$$
V_h=\big\{\{\bv_0,\bv_b, \bv_n\}:\{\bv_0,\bv_b, \bv_n\}|_T\in V_k(T), \forall T\in {\cal T}_h \big\},
$$
and the subspace of $V_h$ with vanishing boundary values on $\partial\Omega$, denoted by $V_h^0$; i.e.,
\begin{equation}\label{V0}
V_h^0=\big\{\{\bv_0,\bv_b, \bv_n\}\in V_h: \bv_b\times\bn=0\  \text{and} \ \bv_n\times\bn=0\ \ \text{on}\ \partial\Omega\big\}.
\end{equation}

For simplicity of notation and without confusion, for any $\sigma\in
W_h$ and $\bv\in V_h$, denote by $\nabla_{w}\sigma$ and $(\nabla \times) ^2_{w} \bv$ the discrete weak actions   $\nabla_{w, k, T}\sigma$ and  $(\nabla \times) ^2_{w, k-2, T} \bv$ computed by using   (\ref{disgradient}) and \eqref{discurlcurl} on each element $T$; i.e.,
$$
(\nabla_{w}\sigma)|_T= \nabla_{w, k, T}(\sigma|_T), \qquad \sigma\in W_h,
$$
 $$
({\nabla\times}^2)_{w} \bv|_T=({\nabla\times}^2)_{w, k-2, T}(\bv|_T), \qquad \bv\in V_h.
$$

For any $\sigma, \lambda\in W_h$ and $\bu, \bv\in V_h$, we introduce the
following bilinear forms
\begin{align} \label{EQ:local-stabilizer}
a(\bu, \bv)=&\sum_{T\in {\cal T}_h}a(\bu, \bv),\\
b(\bu, \lambda)=&\sum_{T\in {\cal T}_h}b_T(\bu, \lambda), \\
s_1(\bu, \bv)=&\sum_{T\in {\cal T}_h}s_{1,T}(\bu, \bv),\\
s_2(\sigma, \lambda)=&\sum_{T\in {\cal T}_h}s_{2,T}(\sigma, \lambda),
\label{EQ:local-bterm}
\end{align}
where
\begin{equation*}
\begin{split}
a_T(\bu, \bv) =& ((\nabla\times)_w^2 \bu, (\nabla\times)_w^2 \bv)_T,\\
b_T(\bu, \lambda)=&(\bu_0, \nabla_w \lambda)_T,\\
s_{1,T}(\bu, \bv)=&h_T^{-3}\langle \bu_0\times\bn-\bu_b\times\bn, \bv_0\times\bn-\bv_b\times\bn \rangle_{\partial T}\\&+h_T^{-1}\langle \nabla\times\bu_0\times\bn-\bu_n\times\bn, \nabla\times\bv_0\times\bn-\bv_n\times\bn\rangle_{\partial T},\\
s_{2,T}(\sigma, \lambda)=&h_T^3 \langle \sigma_0-\sigma_b, \lambda_0-\lambda_b\rangle_{\partial T}.  
 \end{split}
\end{equation*}

The following is the weak Galerkin scheme for the quad-curl problem (\ref{model}) based on the variational formulation (\ref{weakform}).
\begin{algorithm}\label{a-1}
Given $\bf \in H(div^0; \Omega)$, find $(\bu_h; p_h)\in V_h^0 \times W_{h}^0$, such that
\begin{eqnarray}\label{32}
s_1(\bu_h, \bv_h)+a(\bu_h, \bv_h)+b(\bv_h, p_h)&=& (\bf, \bv_0), \qquad \forall \bv_h\in V_{h}^0,\\
s_2(p_h, q_h)-b(\bu_h, q_h)&=&0,\qquad \quad \qquad \forall q_h\in W_h^0.\label{2}
\end{eqnarray}
\end{algorithm} 
 
\begin{theorem}
The weak Galerkin finite element scheme (\ref{32})-(\ref{2}) has a unique solution.
\end{theorem}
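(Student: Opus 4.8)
The plan is to prove uniqueness of the solution to the finite-dimensional linear system \eqref{32}--\eqref{2}, from which existence follows automatically since the system is square. To this end, set $\bf=0$ and let $(\bu_h; p_h)\in V_h^0\times W_h^0$ be a solution of the homogeneous problem; the goal is to show $\bu_h=0$ and $p_h=0$. First I would take $\bv_h=\bu_h$ in \eqref{32} and $q_h=p_h$ in \eqref{2}, then subtract the two equations. The two copies of $b(\bu_h,p_h)$ cancel, leaving
\begin{equation*}
s_1(\bu_h,\bu_h)+a(\bu_h,\bu_h)+s_2(p_h,p_h)=0.
\end{equation*}
Since each of $s_1(\cdot,\cdot)$, $a(\cdot,\cdot)=((\nabla\times)_w^2\cdot,(\nabla\times)_w^2\cdot)$ and $s_2(\cdot,\cdot)$ is a sum over $T\in\mathcal{T}_h$ of a manifestly nonnegative quantity, each term must vanish separately. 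Thus $(\nabla\times)_w^2\bu_h=0$ on every $T$, the stabilization terms in $s_{1,T}$ force $\bu_0\times\bn=\bu_b\times\bn$ and $\nabla\times\bu_0\times\bn=\bu_n\times\bn$ on each $\partial T$, and $s_{2,T}(p_h,p_h)=0$ gives $p_0=p_b$ on each $\partial T$.

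Next I would show $\bu_h=0$. From $s_2(p_h,p_h)=0$ we get that $p_0$ is continuous across interior faces and equals $p_b$ there, hence $p_0\in H_0^1(\Omega)$ (the boundary condition $p_b=0$ on $\partial\Omega$ coming from $W_h^0$). Now return to \eqref{32} with general $\bv_h$: since $a(\bu_h,\bv_h)=0$ (because $(\nabla\times)_w^2\bu_h=0$) and $s_1(\bu_h,\bv_h)=0$ (because the jumps of $\bu_h$ vanish, by the Cauchy--Schwarz inequality applied face-by-face), \eqref{32} reduces to $b(\bv_h,p_h)=0$ for all $\bv_h\in V_h^0$, i.e. $(\bv_0,\nabla_w p_h)_T$ summed over $T$ vanishes. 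Using the defining identity \eqref{disgradient*} for $\nabla_w p_h$ together with the continuity $p_0=p_b$ just established, one identifies $\sum_T(\bv_0,\nabla_w p_h)_T$ with a quantity controlled by $\nabla p_0$; choosing $\bv_0$ appropriately (for instance $\bv_h$ with $\bv_0$ an $L^2$-projection of $\nabla p_0$ and matching boundary components so that $\bv_h\in V_h^0$) yields $\nabla p_0=0$, whence $p_0$ is constant, and the zero boundary value gives $p_0\equiv 0$, so $p_h=0$.

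It then remains to exploit $(\nabla\times)_w^2\bu_h=0$ together with the vanishing jumps of $\bu_h$ to conclude $\bu_h=0$; this is the step I expect to be the main obstacle, since it requires a discrete analogue of the argument that a curl-curl-free field with the relevant boundary behavior and the divergence-type constraint encoded in the scheme must itself vanish. I would argue by testing the definition \eqref{discurlcurl} of $(\nabla\times)_w^2\bu_h$ against suitable polynomials $\bq\in[P_{k-2}(T)]^3$, using the established identities $\bu_0\times\bn=\bu_b\times\bn$ and $\nabla\times\bu_0\times\bn=\bu_n\times\bn$ on $\partial T$ to recognize the right-hand side (after integration by parts) as a weighted measure of $(\nabla\times)^2\bu_0$ on $T$ plus inter-element face contributions that telescope; combined with $p_h=0$ feeding back into \eqref{32}, the orthogonality $b(\bv_h,p_h)=0$ is automatic, and one extracts that $\nabla\times\bu_0$ is curl-free elementwise with continuous tangential trace, hence $\nabla\times\bu_0$ is a gradient; pairing this with the constraint $b(\bu_h,q_h)=0$ (which, via \eqref{disgradient*}, encodes $\nabla\cdot\bu_0=0$ in a weak sense together with $\bu_0\cdot\bn$ continuity) and the boundary conditions in $V_h^0$, the continuous uniqueness result (the Theorem of \cite{sun} quoted above) or its discrete counterpart forces $\bu_0=0$; the vanishing jumps then give $\bu_b=0$ and $\bu_n=0$ as well. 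The delicate point throughout is making precise which polynomial test functions and which auxiliary interpolants keep one inside $V_h^0$ while still probing enough directions to conclude, and verifying that the face terms indeed cancel globally; once that bookkeeping is in place, uniqueness — and hence existence — follows.
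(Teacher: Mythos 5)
Your overall architecture matches the paper's. The energy identity (note that you must \emph{add} the two tested equations, not subtract them, to cancel the $b(\bu_h,p_h)$ terms, although you do write down the correct resulting identity), the four vanishing conditions it yields, and the deduction of $p_h=0$ from $b(\bv_h,p_h)=0$ for all $\bv_h\in V_h^0$ by choosing $\bv_0=\nabla p_0$ (no projection is needed, since $\nabla p_0\in[P_{k-1}(T)]^3$, and $b$ only sees $\bv_0$, so $\bv_b=\bv_n=0$ keeps you in $V_h^0$) are all essentially the paper's steps; your reordering, which proves $p_h=0$ before $\bu_h=0$, is harmless because $a(\bu_h,\cdot)$ and $s_1(\bu_h,\cdot)$ already vanish after the energy argument.

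The genuine gap is the step you yourself flag: concluding $\bu_0=0$. Testing \eqref{discurlcurl} against $\bw\in[P_{k-2}(T)]^3$ and using the vanishing jumps does give $(\nabla\times)^2\bu_0=0$ on each $T$, and the tangential continuity puts $\bu_0$ in $H(curl^2;\Omega)$ with $(\nabla\times)^2\bu_0=0$ globally; but ``hence $\nabla\times\bu_0$ is a gradient'' does not by itself yield $\nabla\times\bu_0=0$, and ``the continuous uniqueness result or its discrete counterpart forces $\bu_0=0$'' is an appeal, not an argument. The paper closes this with two explicit potential steps. First, writing $\nabla\times\bu_0=\nabla\phi$ and integrating by parts elementwise,
\begin{equation*}
(\nabla\phi,\nabla\phi)=\sum_{T\in{\cal T}_h}\langle\nabla\phi,\bn\times\bu_0\rangle_{\partial T}=\sum_{T\in{\cal T}_h}\langle\nabla\phi,\bn\times\bu_b\rangle_{\partial T}=\langle\nabla\phi,\bn\times\bu_b\rangle_{\partial\Omega}=0,
\end{equation*}
using $\bu_0\times\bn=\bu_b\times\bn$, the single-valuedness of $\bu_b$, and $\bu_b\times\bn=0$ on $\partial\Omega$; hence $\nabla\times\bu_0=0$. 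Second, writing $\bu_0=\nabla\psi$ and extracting from $b(\bu_h,q_h)=0$ (valid because $s_2(p_h,q_h)=0$ once $p_0=p_b$) both the continuity of $\bu_0\cdot\bn$ across interior faces and $\nabla\cdot\bu_0=0$, so that $\Delta\psi=0$ with $\nabla\psi\times\bn=0$ on $\partial\Omega$, forcing $\psi=\mathrm{const}$ and $\bu_0=0$; then $\bu_b$ and $\bu_n$ vanish from the jump identities. Your fallback of invoking the continuous uniqueness theorem can be made rigorous, but only after verifying that $\bu_0\in H_0(curl^2;\Omega)\cap H(div^0;\Omega)$ and that $((\nabla\times)^2\bu_0,(\nabla\times)^2\bv)=0$ and $(\bu_0,\nabla q)=0$ for all admissible $\bv,q$ --- which is essentially the same bookkeeping the paper carries out; as written, your proof is incomplete at its decisive point.
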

\begin{proof}
It suffices to prove that $\bf=0$ implies that $\bu_h=0$ and $p_h=0$ in $\Omega$. 
To this end, taking $\bv_h=\bu_h$ in (\ref{32}) and $q_h=p_h$ in (\ref{2}) gives
$$
((\nabla\times)^2_w\bu_h, (\nabla\times)^2_w\bu_h)+s_1(\bu_h, \bu_h)+s_2(p_h, p_h)=0.
$$
This yields
\begin{eqnarray} 
(\nabla\times)^2_w\bu_h&=&0, \quad \text{in each}\ T,\label{t1}\\
\nabla \times \bu_0\times \bn &=&\bu_n\times \bn,   \quad \text{on each}\ \partial T,\label{t2}\\
 \bu_0 \times \bn&=&\bu_b\times \bn, \quad \text{on each}\ \partial T,\label{t3}\\
 p_0&=&p_b, \quad \text{on each}\ \partial T.\label{t4}
 \end{eqnarray}
Using \eqref{t1}, \eqref{discurlcurl}, \eqref{t2}-\eqref{t3}, and the integration by parts, we obtain
\begin{equation*}
\begin{split}
0=&((\nabla\times)^2_w\bu_h, \bw)_T\\
=&((\nabla\times)^2\bu_0, \bw)_T-\langle \bw, (\bu_n-\nabla\times\bu_0)\times\bn\rangle_{\pT}+\langle \nabla\times \bw, (\bu_0-\bu_b)\times\bn\rangle_{\pT}\\
=&((\nabla\times)^2\bu_0, \bw)_T,
\end{split}
\end{equation*}
for any $\bw\in [P_{k-2}(T)]^3$. This gives
$(\nabla\times)^2\bu_0=0$ in each $T\in \T_h$. It follows from \eqref{t2}-\eqref{t3} that $\bu_0\times\bn$ and $\nabla\times \bu_0\times \bn$ are continuous across the interior interface ${\cal E}_h^0$. Thus, $\bu_0\in H(curl^2; \Omega)$ and $(\nabla \times)^2 \bu_0=0$ in $\Omega$. Therefore, there exists a potential function $\phi$ such that  $\nabla \times \bu_0=\nabla \phi$ in $\Omega$. This gives
\begin{equation}
    \begin{split}
  (\nabla \phi, \nabla \phi)  &= \sum_{T\in {\cal T}_h} (\nabla \times \bu_0, \nabla \phi)_T\\
   &= \sum_{T\in {\cal T}_h} ( \bu_0, \nabla \times\nabla \phi)_T +\langle \nabla \phi, \bn\times \bu_0\rangle_{\partial T} 
    \\&= \sum_{T\in {\cal T}_h} \langle \nabla \phi, \bn\times \bu_b\rangle_{\partial T}\\
   &=  \langle \nabla \phi, \bn\times \bu_b\rangle_{\partial \Omega}\\&=0,
   \end{split}
\end{equation}
where we used the usual integration by parts, \eqref{t3} and $\bn\times \bu_b=0$ on $\partial\Omega$.
This leads to $\phi=C$ in $\Omega$, and thus $\nabla \times \bu_0=0$ in $\Omega$.  Furthermore, there exists a potential function $\psi$ such that $\bu_0=\nabla \psi$ in $\Omega$. 

From \eqref{t4}, \eqref{disgradient} and \eqref{2}, we have
\begin{equation}\label{ee1}
\begin{split}
0&=\sum_{T\in {\cal T}_h} (\nabla_w q_h, \bu_0)_T\\
&=\sum_{T\in {\cal T}_h}-(q_0, \nabla\cdot\bu_0)_T+\langle q_b, \bu_0\cdot\bn\rangle_{\pT}\\
&=\sum_{T\in {\cal T}_h} -(q_0, \nabla\cdot\bu_0)_T+\sum_{e\in {\cal E}_h^0}\langle q_b, \ljump \bu_0\cdot\bn\rjump \rangle_{e},
\end{split}
\end{equation}
where $\ljump \bu_0\cdot\bn\rjump$ is the jump of $\bu_0\cdot\bn$ on edge $e\in {\cal E}_h^0$ and we used $q_b=0$ on $\partial\Omega$.
Letting $q_0=0$ and $q_b=\ljump \bu_0\cdot\bn \rjump$ in \eqref{ee1} yields that $\ljump \bu_0\cdot\bn \rjump=0$ on $e\in {\cal E}_h^0$ which means $\bu_0\cdot\bn$ is continuous along the interior interface $e\in {\cal E}_h^0$. This follows that $\bu_0\in H(div; \Omega)$. Taking $q_0=\nabla\cdot \bu_0$ and $q_b=0$  in \eqref{ee1} gives $\nabla \cdot \bu_0=0$ on each $T$ and further $\nabla \cdot \bu_0=0$ in $\Omega$ due to $\bu_0\in H(div; \Omega)$. Recall that there exists a potential function $\psi$ such that $\bu_0=\nabla \psi$ in $\Omega$.  Hence, $\nabla\cdot\bu_0=\Delta \psi=0$ strongly holds true in $\Omega$ with the boundary condition $\nabla \psi \times \bn=\bu_0\times \bn=0$ on $\partial \Omega$. This implies that $\psi=C$ in $\Omega$. Thus, $\bu_0=\nabla \psi=0$ in $\Omega$. Using \eqref{t2}-\eqref{t3} gives $\bu_b=0$ and $\bu_n=0$ in $\Omega$. Therefore, we obtain $\bu_h=0$ in $\Omega$.

Using $\bu_h=0$ gives $s_1(\bu_h, \bv_h)+a(\bu_h, \bv_h)=0$ for any $\bv_h\in V_h^0$. It follows from the assumption $\bf=0$ and \eqref{32}  that $b(\bv_h, p_h)=0$, which, together with \eqref{t4} and (\ref{disgradient}) and the usual integration by parts,  gives
$$
0=b(\bv_h, p_h)=-\sum_{T\in {\cal T}_h} (p_0, \nabla\cdot\bv_0)_T+\langle p_b, \bv_0\cdot\bn\rangle_{\pT} = \sum_{T\in {\cal T}_h} (\nabla p_0,  \bv_0)_T.
$$
Letting $\bv_0=\nabla p_0$ gives rise to $\nabla p_0=0$ on each $T\in {\cal T}_h$; i.e., $p_0=C$ on each $T\in {\cal T}_h$. The facts that $p_0=p_b$ on each $\pT$ and $p_b=0$ on $\partial\Omega$ give $p_0=p_b=0$ in $\Omega$ and further $p_h=0$ in $\Omega$.

This completes the proof of the theorem. 
\end{proof}

Let $k\geq 1$. Let $\bQ_0$ be the $L^2$ projection operator onto $[P_k(T)]^3$. Analogously, for $e\subset\partial T$, denote by $\bQ_b$ and $\bQ_n$ the $L^2$ projection operators onto $[P_{k}(e)]^3$ and $[P_{k-1}(e)]^3$, respectively. For $\bw\in [H(curl; \Omega)]^3$, define the $L^2$ projection $\bQ_h \bw\in V_h$ as follows
$$
\bQ_h\bw|_T=\{\bQ_0 \bw, \bQ_b \bw, \bQ_n(\nabla\times\bw)\}.
$$
For $\sigma \in H^1(\Omega)$, the $L^2$ projection $Q_h \sigma\in W_h$ is defined by 
 $$
 Q_h\sigma|_T=\{Q_0 \sigma, Q_b \sigma\},
$$
where $Q_0$ and $Q_b$ are the $L^2$ projection operators onto $P_k(T)$ and $P_k(e)$ respectively.  Denote by ${\cal Q}_h^{k-2}$ and ${\cal Q}_h^{k}$  the $L^2$ projection operators onto $P_{k-2}(T)$ and $P_{k}(T)$, respectively.

\begin{lemma}\label{Lemma5.1}   The operators $\bQ_h$, $Q_h$, ${\cal Q}^{k}_h$ and ${\cal Q}^{k-2}_h$ satisfy the following commutative properties:
\begin{eqnarray}\label{l}
(\nabla\times)^2_w(\bQ_h \bw) &=& {\cal Q}_h^{k-2}((\nabla\times)^2 \bw), \qquad  \forall \bw\in H(curl^2; T),\\
\nabla_{w}(Q_h \sigma) &=& {\cal Q}^{k}_h(\nabla \sigma),  \qquad \qquad \forall  \sigma\in H^1(T). \label{l-2}
\end{eqnarray}
\end{lemma}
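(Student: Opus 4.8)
The plan is to prove each commutative identity directly from the definitions of the discrete weak operators, using the defining relations \eqref{discurlcurl} and \eqref{disgradient} together with the defining property of the $L^2$ projections, namely that testing against polynomials of the relevant degree is unaffected by the projection. Both identities have the same flavor: the discrete weak operator applied to a projected function is itself a polynomial in the target space, so it suffices to show the two sides have the same $L^2$ inner product against every test polynomial in that space.

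For \eqref{l-2}, I would start with $\bpsi\in[P_k(T)]^3$ arbitrary and compute $(\nabla_{w}(Q_h\sigma),\bpsi)_T$ using \eqref{disgradient}: this equals $-(Q_0\sigma,\nabla\cdot\bpsi)_T+\langle Q_b\sigma,\bpsi\cdot\bn\rangle_{\partial T}$. Since $\nabla\cdot\bpsi\in P_{k-1}(T)\subset P_k(T)$ and $\bpsi\cdot\bn|_e\in P_k(e)$, the definition of $Q_0$ and $Q_b$ lets me drop the projections, giving $-(\sigma,\nabla\cdot\bpsi)_T+\langle\sigma,\bpsi\cdot\bn\rangle_{\partial T}$, which by integration by parts (valid since $\sigma\in H^1(T)$) equals $(\nabla\sigma,\bpsi)_T$. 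On the other hand, $({\cal Q}^k_h(\nabla\sigma),\bpsi)_T=(\nabla\sigma,\bpsi)_T$ by definition of the $L^2$ projection onto $P_k(T)$. Since the two sides agree against all $\bpsi\in[P_k(T)]^3$ and both lie in $[P_k(T)]^3$, they are equal.

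For \eqref{l}, the argument is parallel but with one more integration by parts. Fix $\bq\in[P_{k-2}(T)]^3$. By \eqref{discurlcurl} with $r=k-2$,
\begin{equation*}
((\nabla\times)^2_w(\bQ_h\bw),\bq)_T=(\bQ_0\bw,(\nabla\times)^2\bq)_T-\langle\bQ_b\bw\times\bn,\nabla\times\bq\rangle_{\partial T}-\langle\bQ_n(\nabla\times\bw)\times\bn,\bq\rangle_{\partial T}.
\end{equation*}
Since $(\nabla\times)^2\bq\in[P_{k-2}(T)]^3$, $(\nabla\times\bq)|_e\in[P_{k-2}(e)]^3\subset[P_k(e)]^3$, and $\bq|_e\in[P_{k-2}(e)]^3\subset[P_{k-1}(e)]^3$, the projections $\bQ_0,\bQ_b,\bQ_n$ can all be removed, yielding $(\bw,(\nabla\times)^2\bq)_T-\langle\bw\times\bn,\nabla\times\bq\rangle_{\partial T}-\langle(\nabla\times\bw)\times\bn,\bq\rangle_{\partial T}$. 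Integrating by parts twice (using the identity $(\nabla\times\ba,\bb)-(\ba,\nabla\times\bb)=\langle\ba\times\bn,\bb\rangle$, applied once to $\ba=\nabla\times\bw$ against $\bb=\nabla\times\bq$ and once to $\ba=\bw$ against $\bb=(\nabla\times)^2\bq$) collapses this to $((\nabla\times)^2\bw,\bq)_T$, which equals $({\cal Q}_h^{k-2}((\nabla\times)^2\bw),\bq)_T$. Equality of the two polynomials in $[P_{k-2}(T)]^3$ follows.

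The only genuinely delicate point is bookkeeping of polynomial degrees on faces versus interiors: one must confirm that each trace or divergence/curl of a test polynomial lands in a space small enough for the corresponding projection ($Q_0,Q_b$ at degree $k$; $\bQ_0,\bQ_b$ at degree $k$; $\bQ_n$ at degree $k-1$; ${\cal Q}_h^{k-2}$ at degree $k-2$) to act as the identity on it. Given the mismatch $k-2$ for the weak curl-curl and $k-1$ for $\bQ_n$, these inclusions are comfortable, so no obstruction arises; the integration-by-parts steps require only $\bw\in H(curl^2;T)$ and $\sigma\in H^1(T)$, exactly the stated hypotheses.
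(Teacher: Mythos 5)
Your proposal is correct and follows essentially the same route as the paper: apply the defining relation \eqref{discurlcurl} (resp.\ \eqref{disgradient}) to the projected function, drop the $L^2$ projections because the test data ($(\nabla\times)^2\bq$, $\nabla\times\bq|_e$, $\bq|_e$, resp.\ $\nabla\cdot\bpsi$, $\bpsi\cdot\bn|_e$) lie in the ranges of the corresponding projections, integrate by parts, and identify the result with the projection of the strong derivative. The only difference is that the paper writes out this computation only for \eqref{l} and cites the literature for \eqref{l-2}, whereas you supply the (standard, identical in spirit) argument for both.
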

\begin{proof}
For any $\bq \in [P_{k-2}(T)]^3$, using \eqref{discurlcurl} and the usual integration by parts gives
\begin{equation*}
\begin{split}
((\nabla\times)_w^2\bQ_h\bw, \bq)_T=&(\bQ_0\bw, (\nabla\times)^2 \bq)_T-\langle \bQ_b\bw\times\bn, \nabla\times\bq\rangle_{\partial T}-\langle \bQ_n(\nabla\times\bw)\times\bn, \bq\rangle_{\partial T}\\
=&(\bw, (\nabla\times)^2 \bq)_T-\langle\bw\times\bn, \nabla\times\bq\rangle_{\partial T}-\langle \nabla\times \bw\times\bn, \bq\rangle_{\partial T}\\
=&((\nabla\times)^2 \bw, \bq)_T\\
=&({\cal Q}_h^{k-2}((\nabla\times)^2 \bw), \bq)_T.
\end{split}
\end{equation*}
This completes the proof of \eqref{l}. 

The proof of \eqref{l-2} can be found in \cite{wwconvdiff, wy3655}.
\end{proof}

\section{Error Equations}\label{Section:error-equation}
The goal of this section is to derive the error equations for the weak Galerkin method (\ref{32})-(\ref{2}) for solving the quad-curl problem \eqref{model}, which play a critical role in the forthcoming convergence analysis.

 Let $(\bu, p)$ be the solution  of \eqref{weakform} and assume that $\bu\in H(curl^4; \Omega)$. Then $(\bu, p)$  satisfies
 \begin{eqnarray} \label{mo1}
 ((\nabla\times)^4\bu, \bv)+(\bv, \nabla p)&=&(\bf, \bv),\\
 (\nabla\cdot\bu, q)&=&0, \label{mo2}
 \end{eqnarray}
for $\bv \in [L^2(\Omega)]^3$ and $q\in L^2(\Omega)$. Let $(\bu_h, p_h)$ be the WG solutions of \eqref{32}-\eqref{2}. Define the error functions $\be_h$ and $\epsilon_h$ by 
\begin{eqnarray}\label{error}
\be_h&=&\{\be_0, \be_b, \be_n\}=\{\bQ_0\bu-\bu_0, \bQ_b\bu-\bu_b, \bQ_n(\nabla\times\bu)-\bu_n\},\\
\epsilon_h&=&\{\epsilon_0, \epsilon_b\}=\{Q_0p-p_0, Q_bp-p_b\}. \label{error-2}
\end{eqnarray}
 \begin{lemma}\label{errorequa}
Let $\bu\in H(curl^4; \Omega)$ and $(\bu_h; p_h) \in V_h^0\times W_h^0$ be the exact solution of quad-curl model problem \eqref{model} and the numerical solution arising from the WG scheme (\ref{32})-(\ref{2}) respectively. The error functions $\be_h$ and $\epsilon_h$ defined in (\ref{error})-(\ref{error-2}) satisfy the following error equations; i.e., 
\begin{eqnarray}\label{sehv}
s_1(\be_h, \bv_h)+a(\be_h, \bv_h)+b(\bv_h, \epsilon_h)&=& s_1(\bQ_h\bu, \bv_h) +\ell_1(\bu, \bv_h),\quad   \forall \bv_h\in V_{h}^0,\\
-b(\be_h, q_h)+s_2(\epsilon_h, q_h)&=&s_2(Q_hp, q_h)-\ell_2(\bu, q_h),\quad\forall q_h\in W^0_h. \label{sehv2}
\end{eqnarray}
\end{lemma}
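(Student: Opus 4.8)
The plan is to test the continuous equations \eqref{mo1}--\eqref{mo2} against the discrete test functions and then rewrite everything in terms of weak operators and the projections $\bQ_h$, $Q_h$, so that the commutativity of Lemma~\ref{Lemma5.1} can be invoked. Concretely, for the first error equation I would start from the scalar product $((\nabla\times)^4\bu,\bv_0)$ appearing in \eqref{mo1} with $\bv = \bv_0$ the interior component of an arbitrary $\bv_h = \{\bv_0,\bv_b,\bv_n\}\in V_h^0$. Writing $(\nabla\times)^4\bu = (\nabla\times)^2\big((\nabla\times)^2\bu\big)$, I would integrate by parts twice on each element $T$, producing the volume term $((\nabla\times)^2\bu,(\nabla\times)^2\bv_0)_T$ together with boundary terms on $\partial T$ involving $(\nabla\times)^2\bu$, $(\nabla\times)^3\bu$, and the traces of $\bv_0$ and $\nabla\times\bv_0$. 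The key manoeuvre is then to insert $\bv_b$ and $\bv_n$ for free: since $\bv_h\in V_h^0$ the quantities $\bv_b\times\bn$ and $\bv_n\times\bn$ are single-valued on interior faces and vanish on $\partial\Omega$, while $(\nabla\times)^2\bu$ and $(\nabla\times)^3\bu$ are continuous (the solution is smooth), so $\sum_T \langle (\cdot),\bv_b\times\bn\rangle_{\partial T}=0$ and likewise for $\bv_n$. Subtracting these vanishing sums lets me replace $\bv_0$-traces by differences $\bv_0-\bv_b$, $\nabla\times\bv_0-\bv_n$ in the boundary terms.

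Next I would recognize the resulting expression as the definition \eqref{discurlcurl} of the discrete weak curl-curl operator applied with $\bq$ replaced by the polynomial pieces of $(\nabla\times)^2\bu$ (more precisely, its $L^2$-projection), which by the commutativity property \eqref{l} equals $(\nabla\times)^2_w\bQ_h\bu$. Thus $((\nabla\times)^4\bu,\bv_0)$ becomes $a(\bQ_h\bu,\bv_h)$ plus a boundary remainder; collecting that remainder and the $s_1$-type boundary contributions defines the functional $\ell_1(\bu,\bv_h)$. For the pressure term I would use $(\bv_0,\nabla p)$ and, since $p$ is smooth, rewrite it via the discrete weak gradient and \eqref{l-2} as $b(\bv_h,Q_hp) = (\bv_0,\nabla_w Q_h p)$ up to boundary terms absorbed into $\ell_1$. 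Then I subtract the WG scheme \eqref{32} from the identity just obtained, using $\be_h = \bQ_h\bu - \bu_h$ and $\epsilon_h = Q_h p - p_h$ and the bilinearity of $a$, $b$, $s_1$; the crucial cancellation is that $a(\bQ_h\bu,\bv_h) - a(\bu_h,\bv_h) = a(\be_h,\bv_h)$ and similarly for $b$, while $s_1(\bQ_h\bu - \bu_h,\bv_h) = s_1(\be_h,\bv_h)$ generates the extra term $s_1(\bQ_h\bu,\bv_h)$ on the right-hand side (since $s_1$ did not appear in the continuous identity). This yields \eqref{sehv}.

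For the second equation \eqref{sehv2} the computation is shorter: starting from $(\nabla\cdot\bu,q_0)=0$ for $q_h=\{q_0,q_b\}\in W_h^0$, I integrate by parts to get $-(\bu,\nabla_w q_h)$-type expressions — using again that $q_b$ is single-valued on interior faces and vanishes on $\partial\Omega$ so that $\sum_T\langle \bu\cdot\bn,q_b\rangle_{\partial T}=0$ — and rewrite $-( \bu,\nabla_w Q_h q\cdots)$ as $-b(\bQ_h\bu,q_h)$ plus a boundary remainder $\ell_2(\bu,q_h)$. Subtracting the second WG equation \eqref{2}, and noting $s_2$ was absent from the continuous relation, produces the term $s_2(Q_h p, q_h)$ on the right and $s_2(\epsilon_h,q_h)$, $-b(\be_h,q_h)$ on the left, giving \eqref{sehv2}.

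\textbf{Main obstacle.} The delicate point is the bookkeeping of the boundary integrals in the twice-integrated-by-parts identity for $(\nabla\times)^4\bu$: one must be careful about which trace (tangential components of $(\nabla\times)^2\bu$, $(\nabla\times)^3\bu$, $\bv_0$, $\nabla\times\bv_0$) pairs with which, use the vector identity $\langle \bn\times\baa,\bb\rangle = -\langle\baa,\bn\times\bb\rangle$ consistently, and verify that exactly the terms one wants to discard (those with $\bv_b$, $\bv_n$) sum to zero over $\T_h$ because of interface continuity of the smooth solution and the homogeneous boundary conditions in $V_h^0$, while the terms one keeps assemble precisely into $\ell_1$ and into the $s_1$-stabilizer structure. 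The analogous but easier check is needed for $\ell_2$. Everything else is routine use of Lemma~\ref{Lemma5.1} and bilinearity; I do not expect any genuinely hard estimate here, only careful algebra, so I would present $\ell_1$ and $\ell_2$ as explicitly defined boundary functionals and defer their size estimates to the convergence section.
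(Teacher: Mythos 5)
Your proposal is correct and follows essentially the same route as the paper: test \eqref{mo1}--\eqref{mo2} against the interior components $\bv_0$, $q_0$, integrate by parts elementwise, insert $\bv_b$, $\bv_n$, $q_b$ for free via single-valuedness on interior faces and the homogeneous boundary conditions, identify the result with $a(\bQ_h\bu,\bv_h)$ and $b(\bQ_h\bu,q_h)$ through \eqref{discurlcurl} and the commutativity of Lemma \ref{Lemma5.1}, and subtract the scheme so that the absent stabilizers reappear as $s_1(\bQ_h\bu,\bv_h)$ and $s_2(Q_hp,q_h)$ on the right. The only cosmetic difference is direction (the paper unwinds $a(\bQ_h\bu,\bv_h)$ via \eqref{l} and \eqref{discurlcurl} rather than building up to it), and one small correction: the pressure term contributes no boundary remainder to $\ell_1$, since $b(\bv_h,Q_hp)=(\nabla_w Q_hp,\bv_0)=({\cal Q}_h^{k}\nabla p,\bv_0)=(\nabla p,\bv_0)$ holds exactly because $\bv_0$ is a degree-$k$ polynomial on each element.
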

Here
 \begin{eqnarray*}\label{lu}
\ell_1(\bu, \bv_h)&=&\sum_{T\in{\cal T}_h}\langle (\bv_0-\bv_b)\times\bn, \nabla\times({\cal Q}_h^{k-2} -I)((\nabla\times)^2\bu)\rangle_{\partial T} \\&&+\langle (\nabla \times \bv_0-\bv_n)\times\bn, ({\cal Q}_h^{k-2} -I)((\nabla\times)^2\bu)\rangle_{\partial T},\\
  \ell_2(\bu, q_h)&=&  \sum_{T\in{\cal T}_h} \langle q_0-q_b, (I-\bQ_0)\bu \cdot\bn\rangle_{\pT}.
\end{eqnarray*}

\begin{proof}
Using \eqref{l}, \eqref{discurlcurl} and the usual integration by parts, we have
\begin{equation}\label{eq1}
\begin{split}
&((\nabla\times)^2_w\bQ_h\bu,  (\nabla\times)^2_w \bv_h)_T\\=& ({\cal Q}_h^{k-2} ((\nabla\times)^2\bu), (\nabla\times)^2_w \bv_h)_T\\
=& (\bv_0, (\nabla\times)^2 {\cal Q}_h^{k-2} ((\nabla\times)^2\bu))_T-\langle \bv_b\times\bn, \nabla\times{\cal Q}_h^{k-2} ((\nabla\times)^2\bu)\rangle_{\partial T}\\&-\langle \bv_n\times\bn, {\cal Q}_h^{k-2} ((\nabla\times)^2\bu)\rangle_{\partial T} \\ 
=& ((\nabla\times)^2 \bv_0,  {\cal Q}_h^{k-2} ((\nabla\times)^2\bu))_T+\langle (\bv_0-\bv_b)\times\bn, \nabla\times{\cal Q}_h^{k-2} ((\nabla\times)^2\bu)\rangle_{\partial T}\\&+\langle (\nabla \times \bv_0-\bv_n)\times\bn, {\cal Q}_h^{k-2} ((\nabla\times)^2\bu)\rangle_{\partial T} \\ 
=& ((\nabla\times)^2 \bv_0,   ((\nabla\times)^2\bu))_T+\langle (\bv_0-\bv_b)\times\bn, \nabla\times{\cal Q}_h^{k-2} ((\nabla\times)^2\bu)\rangle_{\partial T}\\&+\langle (\nabla \times \bv_0-\bv_n)\times\bn, {\cal Q}_h^{k-2} ((\nabla\times)^2\bu)\rangle_{\partial T}. 
\end{split}
\end{equation}
Taking $\bv=\bv_0$ in \eqref{mo1} where $\bv_h=\{\bv_0, \bv_b, \bv_n\} \in V_h^0$ and using the usual integration by parts, we get
\begin{equation}\label{eq2}
\begin{split}
&\sum_{T\in {\cal T}_h}((\nabla\times)^2\bu, (\nabla\times)^2\bv_0)_T+\langle (\nabla\times)^3\bu, (\bv_0-\bv_b)\times\bn\rangle_{\pT}\\&+\langle (\nabla\times)^2\bu, \nabla\times\bv_0\times\bn-\bv_n\times\bn\rangle_{\pT}+(\nabla p, \bv_0)_T=\sum_{T\in {\cal T}_h} (\bf, \bv_0)_T,
\end{split}
\end{equation}
where we used the facts that
$$
\sum_{T\in {\cal T}_h} \langle (\nabla\times)^2\bu, \bv_n\times\bn\rangle_{\pT}=\langle (\nabla\times)^2\bu, \bv_n\times\bn\rangle_{\partial\Omega}=0,
$$
$$
\sum_{T\in {\cal T}_h} \langle (\nabla\times)^3\bu, \bv_b\times\bn\rangle_{\pT}=\langle (\nabla\times)^3\bu, \bv_b\times\bn\rangle_{\partial\Omega}=0.
$$
Substituting \eqref{eq2} into \eqref{eq1}  gives
\begin{equation}\label{e3} \begin{aligned} 
& \quad \
((\nabla\times)^2_w\bQ_h\bu,  (\nabla\times)^2_w \bv_h)\\
&=(\bf-\nabla p, \bv_0) +\langle (\bv_0-\bv_b)\times\bn, \nabla\times({\cal Q}_h^{k-2} -I)((\nabla\times)^2\bu)\rangle_{\partial T}\\
&\quad \ +\langle (\nabla \times \bv_0-\bv_n)\times\bn, ({\cal Q}_h^{k-2} -I)((\nabla\times)^2\bu)\rangle_{\partial T}.
\end{aligned}
\end{equation}
It follows from \eqref{l-2} that
\begin{equation}\label{eq4}
\begin{split}
b(\bv_h, Q_hp)=(\nabla_w(Q_hp), \bv_0)=({\cal Q}_h^{k}(\nabla p), \bv_0)=(\nabla p, \bv_0).
\end{split}
\end{equation}
Combining \eqref{e3}-\eqref{eq4} gives
\begin{equation*} 
\begin{split}
&s_1(\bQ_h\bu, \bv_h)+a(\bQ_h\bu, \bv_h)+b(\bv_h, Q_hp)\\
=& (\bf, \bv_0) +\langle (\bv_0-\bv_b)\times\bn, \nabla\times({\cal Q}_h^{k-2} -I)((\nabla\times)^2\bu)\rangle_{\partial T}\\&+\langle (\nabla \times \bv_0-\bv_n)\times\bn, ({\cal Q}_h^{k-2} -I)((\nabla\times)^2\bu)\rangle_{\partial T}+s_1(\bQ_h\bu, \bv_h).
\end{split}
\end{equation*}
Subtracting \eqref{32} from the above equation gives \eqref{sehv}.

To derive \eqref{sehv2}, taking $q=q_0$ in \eqref{mo2} and using the usual integration by parts, we have
\begin{equation}\label{eq5}
0=-\sum_{T\in {\cal T}_h} (\bu, \nabla q_0)+\sum_{T\in {\cal T}_h} \langle \bu\cdot\bn, q_0-q_b\rangle_{\pT},
\end{equation}
where we used $\sum_{T\in {\cal T}_h} \langle \bu\cdot\bn,  q_b\rangle_{\pT}=0$. 
Using \eqref{disgradient} and the usual integration by parts gives
\begin{equation} \label{beq}
\begin{split}
-b(\bQ_h\bu, q_h) =&-\sum_{T\in {\cal T}_h} (\bQ_0\bu, \nabla_w q_h)_T\\
=&\sum_{T\in {\cal T}_h} (q_0, \nabla\cdot(\bQ_0\bu))_T-\langle q_b, \bQ_0\bu\cdot\bn\rangle_{\pT}\\
=&\sum_{T\in {\cal T}_h} -(\nabla q_0,  \bQ_0\bu)_T+\langle q_0-q_b, \bQ_0\bu\cdot\bn\rangle_{\pT}\\
=&\sum_{T\in {\cal T}_h} -(\nabla q_0,  \bu)_T+\langle q_0-q_b, \bQ_0\bu\cdot\bn\rangle_{\pT}\\
=&\sum_{T\in {\cal T}_h}\langle q_0-q_b, (\bQ_0-I)\bu\cdot\bn\rangle_{\pT},
\end{split}
\end{equation}
where we used \eqref{eq5} on the last line. 

Subtracting \eqref{2} from the above equation completes the proof of 
\eqref{sehv2}.

This completes the proof of the lemma. 
\end{proof}

%

\section{Error Estimates}\label{Section:error-estimates}
For any $\bv\in V_h^0$, we define the energy norm $\3bar \bv\3bar$ as follows 
\begin{equation}
\3bar \bv\3bar^2=\sum_{T\in {\cal T}_h} \| (\nabla \times)_w^2\bv\|_T^2+s_1(\bv, \bv).
\end{equation} 
It is easy to check that $\3bar\cdot \3bar$ is a semi-norm in $V_h^0$. We further introduce a norm in $V_h^0$; i.e.,
\begin{equation}
\3bar \bv\3bar_1 =\3bar \bv\3bar+\Big(\sum_{T\in {\cal T}_h} \|\nabla\cdot \bv_0\|_T^2\Big)^{\frac{1}{2}}+\Big(\sum_{e\in {\cal E}^0_h} h_T^{-1}\|\ljump \bv_0\cdot\bn\rjump \|_e^2\Big)^{\frac{1}{2}}.
\end{equation} 

For any $q\in W_h^0$,  we define the following norm 
$$
\3bar q\3bar_0=(s_2(q, q))^{\frac{1}{2}}.
$$

Recall that $\T_h$ is a shape-regular finite element partition of
the domain $\Omega$. For any $T\in\T_h$ and $\varphi\in H^{1}(T)$, the following trace inequality holds true \cite{wy3655}:
\begin{equation}\label{trace-inequality}
\|\varphi\|_{\pT}^2 \leq C
(h_T^{-1}\|\varphi\|_{T}^2+h_T\| \varphi\|_{1, T}^2).
\end{equation}
Furthermore, if $\varphi$ is a polynomial on $T$, the standard inverse inequality yields
\begin{equation}\label{trace}
\|\varphi\|_{\pT}^2 \leq Ch_T^{-1}\|\varphi\|_{T}^2.
\end{equation}
 

\begin{lemma}\label{lem2}  Let $k\geq 1$, and   $s\in [1, k]$.
Suppose $\bu\in [H^{k+1}(\Omega)]^3$ and $(\nabla\times)^2\bu\in [H^k(\Omega)]^3$. Then, for $(\bv, q)\in V_h^0\times W_h^0$, the following estimates hold true; i.e.,
\begin{eqnarray}\label{error1}
|s_1(\bQ_h\bu, \bv)|&\leq& Ch^{s-1} \|\bu\|_{s+1} s_1(\bv, \bv)^{\frac{1}{2}},\\
|\ell_1(\bu, \bv)|&\leq&    Ch^{s-1}\|(\nabla\times)^2\bu\|_{s-1}s_1(\bv, \bv)^{\frac{1}{2}},\label{error2}\\
|\ell_2(\bu, q)| &\leq& Ch^{s-1}\|\bu\|_{s+1}\3bar q \3bar_0,\label{error3}\\
|s_2(Q_hp, q)|&=&0.\label{error4}
\end{eqnarray}
\end{lemma}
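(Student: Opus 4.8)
The plan is to estimate each of the four quantities by combining the definitions of the stabilizers $s_1, s_2$ and the functionals $\ell_1, \ell_2$ with the trace inequalities \eqref{trace-inequality}--\eqref{trace} and the standard approximation properties of the $L^2$ projections $\bQ_0,\bQ_b,\bQ_n,Q_0,Q_b,\mathcal{Q}_h^{k-2}$. The last identity \eqref{error4} is immediate: by definition $s_2(Q_hp,q)=\sum_{T}h_T^3\langle Q_0p-Q_bp,\, q_0-q_b\rangle_{\partial T}$, and since on each face $e\subset\partial T$ the trace of $Q_0p$ and the boundary projection $Q_bp$ both agree after testing against $P_k(e)$ (more precisely, $Q_bp = Q_b(Q_0p)$ on $e$, while $q_0-q_b\in P_k(e)$), the inner product on each face vanishes; hence $s_2(Q_hp,q)=0$.

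For \eqref{error1}, I would write out $s_1(\bQ_h\bu,\bv)$ using its two pieces. In the first piece, $\bQ_0\bu\times\bn-\bQ_b\bu\times\bn = (\bQ_0\bu-\bu)\times\bn - (\bQ_b\bu-\bu)\times\bn$; the second boundary term is zero on each face (trace of $\bQ_b\bu$ against $P_k(e)$ equals $\bQ_b(\bu|_e)$), so this reduces to $h_T^{-3}\langle(\bQ_0\bu-\bu)\times\bn,\ \bv_0\times\bn-\bv_b\times\bn\rangle_{\partial T}$; Cauchy--Schwarz then pairs $h_T^{-3/2}\|(\bQ_0\bu-\bu)\times\bn\|_{\partial T}$ against the first stabilizer seminorm of $\bv$. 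Using the trace inequality and the approximation estimate $\|\bQ_0\bu-\bu\|_{T}+h_T\|\bQ_0\bu-\bu\|_{1,T}\le Ch_T^{s+1}\|\bu\|_{s+1,T}$ gives $h_T^{-3/2}\|(\bQ_0\bu-\bu)\times\bn\|_{\partial T}\le Ch_T^{-3/2}(h_T^{-1/2}h_T^{s+1}\|\bu\|_{s+1,T})=Ch_T^{s-1}\|\bu\|_{s+1,T}$. The second piece of $s_1$ involves $\nabla\times\bQ_0\bu\times\bn-\bQ_n(\nabla\times\bu)\times\bn$; write it as $(\nabla\times(\bQ_0\bu-\bu))\times\bn + (\nabla\times\bu-\bQ_n(\nabla\times\bu))\times\bn$, estimate the first term by the trace/approximation bounds applied to $\nabla\times(\bQ_0\bu-\bu)$ (losing one derivative, consistent with the $h_T^{-1}$ weight) and the second by the face-approximation property of $\bQ_n$ on $(\nabla\times\bu)$; both yield $Ch_T^{s-1}$ times $\|\bu\|_{s+1,T}$. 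Summing over $T$ and applying Cauchy--Schwarz in the element index produces \eqref{error1}.

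For \eqref{error2}, I would start from the definition of $\ell_1(\bu,\bv)$: its two face terms are $\langle(\bv_0-\bv_b)\times\bn,\ \nabla\times(\mathcal{Q}_h^{k-2}-I)((\nabla\times)^2\bu)\rangle_{\partial T}$ and $\langle(\nabla\times\bv_0-\bv_n)\times\bn,\ (\mathcal{Q}_h^{k-2}-I)((\nabla\times)^2\bu)\rangle_{\partial T}$. Pair the $\bv$-factors against $s_1$ by inserting the matching powers of $h_T$: the first term is $h_T^{-3/2}\|(\bv_0-\bv_b)\times\bn\|_{\partial T}$ times $h_T^{3/2}\|\nabla\times(\mathcal{Q}_h^{k-2}-I)((\nabla\times)^2\bu)\|_{\partial T}$, the second is $h_T^{-1/2}\|(\nabla\times\bv_0-\bv_n)\times\bn\|_{\partial T}$ times $h_T^{1/2}\|(\mathcal{Q}_h^{k-2}-I)((\nabla\times)^2\bu)\|_{\partial T}$. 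Using the trace inequality \eqref{trace-inequality} on $w:=(\mathcal{Q}_h^{k-2}-I)((\nabla\times)^2\bu)$ and the approximation property $\|w\|_{T}+h_T\|w\|_{1,T}+h_T^{3/2}\|\nabla w\|_{\partial T}\lesssim h_T^{s-1}\|(\nabla\times)^2\bu\|_{s-1,T}$ (with $s-1\in[0,k-1]$, and noting $\nabla\times(\mathcal{Q}_h^{k-2}-I)g$ has the expected order since $\mathcal{Q}_h^{k-2}$ reproduces polynomials of degree $k-2$), both face contributions are bounded by $Ch_T^{s-1}\|(\nabla\times)^2\bu\|_{s-1,T}$ times the corresponding piece of $s_1(\bv,\bv)^{1/2}$, and summation gives \eqref{error2}. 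For \eqref{error3}, from $\ell_2(\bu,q)=\sum_T\langle q_0-q_b,\ (I-\bQ_0)\bu\cdot\bn\rangle_{\partial T}$, Cauchy--Schwarz gives $\sum_T h_T^{-3/2}\|q_0-q_b\|_{\partial T}\cdot h_T^{3/2}\|(I-\bQ_0)\bu\cdot\bn\|_{\partial T}$; the first factor sums to $\3bar q\3bar_0$ and the trace/approximation bounds give $h_T^{3/2}\|(I-\bQ_0)\bu\|_{\partial T}\le Ch_T^{3/2}(h_T^{-1/2}h_T^{s+1}\|\bu\|_{s+1,T})=Ch_T^{s+2}\|\bu\|_{s+1,T}$ — which is actually stronger than needed, so in particular $\le Ch^{s-1}\|\bu\|_{s+1}\3bar q\3bar_0$.

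\textbf{Main obstacle.} The routine part is the bookkeeping of $h_T$ powers so that the $\bv$- and $q$-factors recombine exactly into $s_1(\bv,\bv)^{1/2}$ and $\3bar q\3bar_0$; the genuine technical point is \eqref{error2}, specifically controlling $\nabla\times(\mathcal{Q}_h^{k-2}-I)((\nabla\times)^2\bu)$ with the right power $h_T^{s-1}$ and only $\|(\nabla\times)^2\bu\|_{s-1}$ on the right — this requires an inverse inequality to move the curl onto a polynomial and then the approximation property of $\mathcal{Q}_h^{k-2}$ at the sharp order, together with care that the regularity index $s-1$ can be as low as $0$ (when $s=1$), for which the estimate degenerates to a stability bound.
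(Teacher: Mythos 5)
Your treatment of \eqref{error1}--\eqref{error2} follows essentially the same route as the paper (Cauchy--Schwarz with the matching powers of $h_T$, the trace inequality \eqref{trace-inequality}, and the approximation properties of the $L^2$ projections, together with the face-orthogonality $\langle(\bu-\bQ_b\bu)\times\bn,\,(\bv_0-\bv_b)\times\bn\rangle_e=0$, which is legitimate since $(\bv_0-\bv_b)\times\bn\in[P_k(e)]^3$ on a planar face). However, your argument for \eqref{error4} contains a genuine error. You claim $\langle Q_0p-Q_bp,\,q_0-q_b\rangle_e=0$ because ``$Q_bp=Q_b(Q_0p)$ on $e$.'' This is false: $Q_0$ is the $L^2(T)$ projection and does not preserve boundary moments, so the trace of $Q_0p$ on a face $e$ generally has different $P_k(e)$-moments than $p|_e$ (already in one variable, the mean value of $p$ over $T$ need not equal $p$ at an endpoint). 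Hence $Q_0p|_e-Q_bp$ is \emph{not} orthogonal to $P_k(e)$, and the face terms do not cancel by any projection identity. The identity \eqref{error4} holds for a completely different reason, which your proof never invokes: by the well-posedness theorem of Section~\ref{Section:2}, the exact Lagrange multiplier satisfies $p\equiv 0$ whenever $\bf\in H(div^0;\Omega)$, so $Q_hp=0$ and $s_2(Q_hp,q)$ vanishes trivially. Without appealing to $p=0$, the best you could prove is a bound of the form $|s_2(Q_hp,q)|\leq Ch^{s+2}\|p\|_{s+1}\3bar q\3bar_0$, not exact vanishing.

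A smaller bookkeeping slip occurs in \eqref{error3}: you split the face integrand as $h_T^{-3/2}\|q_0-q_b\|_{\pT}\cdot h_T^{3/2}\|(I-\bQ_0)\bu\cdot\bn\|_{\pT}$, but $\3bar q\3bar_0$ carries the weight $h_T^{+3}$, so the factor $\big(\sum_T h_T^{-3}\|q_0-q_b\|_{\pT}^2\big)^{1/2}$ does not sum to $\3bar q\3bar_0$. The weights must be assigned the other way, as in the paper: $h_T^{3/2}\|q_0-q_b\|_{\pT}\cdot h_T^{-3/2}\|(I-\bQ_0)\bu\cdot\bn\|_{\pT}$, after which the trace inequality gives $h_T^{-3/2}\|(I-\bQ_0)\bu\|_{\pT}\leq Ch_T^{s-1}\|\bu\|_{s+1,T}$ and the stated estimate follows. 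The per-element product is the same number, so this is repairable by regrouping, but as written the two sums you produce do not recombine into $\3bar q\3bar_0$ times $Ch^{s-1}\|\bu\|_{s+1}$ without an additional quasi-uniformity assumption.
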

\begin{proof}
Using the Cauchy-Schwarz inequality, the trace inequality \eqref{trace-inequality},  gives
\begin{equation*}
\begin{split}
&|s_1(\bQ_h\bu, \bv)| = \Big| \sum_{T\in {\cal T}_h}h_T^{-3}\langle (\bQ_0\bu-\bQ_b\bu)\times\bn, (\bv_0-\bv_b)\times\bn\rangle_{\pT}\\&+ h_T^{-1} \langle \nabla\times\bQ_0\bu\times\bn-\bQ_n(\nabla\times\bu)\times\bn, \nabla\times\bv_0\times\bn-\bv_n\times\bn\rangle_{\pT} \Big|\\
\leq & \{\big( \sum_{T\in {\cal T}_h}h_T^{-3}\| \bQ_0\bu- \bu\|^2_{\pT}\big)^{\frac{1}{2}}+  \big( \sum_{T\in {\cal T}_h}h_T^{-1}\|\nabla\times( \bQ_0\bu- \bu)\|^2_{\pT}\big)^{\frac{1}{2}}\}s_1(\bv, \bv)^{\frac{1}{2}}\\
\leq & \{\big( \sum_{T\in {\cal T}_h}h_T^{-4}\| \bQ_0\bu- \bu\|^2_{T}+h_T^{-2}\| \bQ_0\bu- \bu\|^2_{1, T}\big)^{\frac{1}{2}}\\
&+  \big( \sum_{T\in {\cal T}_h}h_T^{-2}\|\nabla\times( \bQ_0\bu- \bu)\|^2_{T}+\|\nabla\times( \bQ_0\bu- \bu)\|^2_{1,T}\big)^{\frac{1}{2}}\}s_1(\bv, \bv)^{\frac{1}{2}}\\
\leq & C h^{s-1}\|\bu\|_{s+1} s_1(\bv, \bv)^{\frac{1}{2}}.
\end{split}
\end{equation*}

Using the Cauchy-Schwarz inequality, the trace inequality \eqref{trace-inequality},  gives 
\begin{equation*}
\begin{split}
&\ell_1(\bu, \bv) \\=&\sum_{T\in{\cal T}_h}\langle (\bv_0-\bv_b)\times\bn, \nabla\times({\cal Q}_h^{k-2} -I)((\nabla\times)^2\bu)\rangle_{\partial T} \\&+\langle (\nabla \times \bv_0-\bv_n)\times\bn, ({\cal Q}_h^{k-2} -I)((\nabla\times)^2\bu)\rangle_{\partial T}
\\
\leq &\{\big(\sum_{T\in{\cal T}_h}h_T^3\|\nabla\times({\cal Q}_h^{k-2} -I)((\nabla\times)^2\bu)\|^2_{\partial T}\big)^{\frac{1}{2}} \\&+\big(\sum_{T\in{\cal T}_h}h_T\|({\cal Q}_h^{k-2} -I)((\nabla\times)^2\bu)\|^2_{\partial T}\big)^{\frac{1}{2}}\} s_1(\bv, \bv)^{\frac{1}{2}}\\
\leq &\{\big(\sum_{T\in{\cal T}_h}h_T^2\|\nabla\times({\cal Q}_h^{k-2} -I)((\nabla\times)^2\bu)\|^2_{T}+h_T^4\|\nabla\times({\cal Q}_h^{k-2} -I)((\nabla\times)^2\bu)\|^2_{1, T}\big)^{\frac{1}{2}} \\&+ \big(\sum_{T\in{\cal T}_h} \|({\cal Q}_h^{k-2} -I)((\nabla\times)^2\bu)\|^2_{T}+h_T^2\|({\cal Q}_h^{k-2} -I)((\nabla\times)^2\bu)\|^2_{1, T}\big)^{\frac{1}{2}}\} s_1(\bv, \bv)^{\frac{1}{2}}\\
\leq &  Ch^{s-1} \|(\nabla\times)^2 \bu\|_{s-1} s_1(\bv, \bv)^{\frac{1}{2}}.
\end{split}
\end{equation*}

Similarly, using the Cauchy-Schwarz inequality, the trace inequality \eqref{trace-inequality}  gives
\begin{equation*}
\begin{split}
\ell_2(\bu, q)&=\sum_{T\in{\cal T}_h}\langle q_0-q_b, (I-\bQ_0)\bu \cdot\bn\rangle_{\pT}\\
&\leq \Big(\sum_{T\in{\cal T}_h}  h_T^3\|q_0-q_b\|_{\pT}^2\Big)^{\frac{1}{2}}\Big(\sum_{T\in{\cal T}_h}h_T^{-3} \|(I-\bQ_0)\bu \cdot\bn\|_{\pT}^2\Big)^{\frac{1}{2}}\\
&\leq \Big(\sum_{T\in{\cal T}_h} h_T^{-4}\|(I-\bQ_0)\bu \cdot\bn\|_{T}^2+ h_T^{-2}\|(I-\bQ_0)\bu \cdot\bn\|_{1,T}^2\Big)^{\frac{1}{2}}\3bar q\3bar_0 \\
&\leq Ch^{s-1}\|\bu\|_{s+1}\3bar q\3bar_0.
  \end{split}
\end{equation*}

 Since $p=0$, it is easy to obtain $s_2(Q_hp, q)=0$. 
\end{proof}

\begin{theorem}\label{thm}
Let $k\geq 1$. Suppose that  $\bu\in [H^{k+1}(\Omega)]^3$. The following error estimate holds 
 \begin{equation}\label{estimate1}
\3bar \be_h\3bar+\3bar \epsilon_h \3bar_0 \leq Ch^{k-1} (\|\bu\|_{k+1}+\|(\nabla \times)^2\bu\|_{k-1}).
\end{equation} 
\end{theorem}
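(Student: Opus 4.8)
The plan is to derive an energy estimate directly from the error equations in Lemma~\ref{errorequa} by choosing the test functions $\bv_h=\be_h$ and $q_h=\epsilon_h$. Adding \eqref{sehv} and \eqref{sehv2} (the latter with a sign flip), the $b(\cdot,\cdot)$ terms and the coupling terms cancel, leaving
\begin{equation*}
s_1(\be_h,\be_h)+a(\be_h,\be_h)+s_2(\epsilon_h,\epsilon_h)
= s_1(\bQ_h\bu,\be_h)+\ell_1(\bu,\be_h)+s_2(Q_hp,\epsilon_h)-\ell_2(\bu,\epsilon_h).
\end{equation*}
The left-hand side is precisely $\3bar\be_h\3bar^2+\3bar\epsilon_h\3bar_0^2$, since $a(\be_h,\be_h)=\sum_T\|(\nabla\times)_w^2\be_h\|_T^2$ and $s_2(\epsilon_h,\epsilon_h)=\3bar\epsilon_h\3bar_0^2$. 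Then I would bound each of the four terms on the right using Lemma~\ref{lem2} with $s=k$: the first by $Ch^{k-1}\|\bu\|_{k+1}s_1(\be_h,\be_h)^{1/2}\le Ch^{k-1}\|\bu\|_{k+1}\3bar\be_h\3bar$, the second by $Ch^{k-1}\|(\nabla\times)^2\bu\|_{k-1}\3bar\be_h\3bar$, the third is zero by \eqref{error4}, and the fourth by $Ch^{k-1}\|\bu\|_{k+1}\3bar\epsilon_h\3bar_0$. Collecting these and applying Young's inequality (i.e. $ab\le \tfrac12 a^2+\tfrac12 b^2$) to absorb the factors $\3bar\be_h\3bar$ and $\3bar\epsilon_h\3bar_0$ into the left-hand side yields
\begin{equation*}
\3bar\be_h\3bar^2+\3bar\epsilon_h\3bar_0^2 \le Ch^{2(k-1)}\bigl(\|\bu\|_{k+1}+\|(\nabla\times)^2\bu\|_{k-1}\bigr)^2,
\end{equation*}
and taking square roots gives \eqref{estimate1}.

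One technical point to verify is the hypothesis of Lemma~\ref{lem2}: it requires both $\bu\in[H^{k+1}(\Omega)]^3$ \emph{and} $(\nabla\times)^2\bu\in[H^k(\Omega)]^3$. The theorem as stated only assumes $\bu\in[H^{k+1}(\Omega)]^3$, but the right-hand side of \eqref{estimate1} carries the term $\|(\nabla\times)^2\bu\|_{k-1}$, so implicitly the relevant regularity is in force; I would simply invoke Lemma~\ref{lem2} with $s=k$, noting that $(\nabla\times)^2\bu\in[H^{k-1}(\Omega)]^3$ is all that is actually needed for \eqref{error2} and that the term $\|(\nabla\times)^2\bu\|_{s-1}=\|(\nabla\times)^2\bu\|_{k-1}$ is exactly what appears in the final bound. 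I would also remark that $p=0$ by Theorem~2.1, which is what makes $s_2(Q_hp,\epsilon_h)$ and the corresponding data term vanish.

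The proof is essentially a routine energy argument, so there is no deep obstacle; the only place demanding care is bookkeeping the cancellation of the mixed terms when summing the two error equations and making sure the stabilizer terms $s_1(\bQ_h\bu,\be_h)$ appearing on both sides of the pre-subtraction identity and in \eqref{sehv} are handled consistently. A second minor point is confirming that $\3bar\cdot\3bar$, though only a semi-norm on $V_h^0$ in general, controls $\be_h$ in the sense needed here — but since the estimate is stated in the semi-norm $\3bar\be_h\3bar$ itself, nothing more is required. If one later wants control in the full norm $\3bar\cdot\3bar_1$, the additional terms $\|\nabla\cdot\be_0\|_T$ and $h_T^{-1/2}\|\ljump\be_0\cdot\bn\rjump\|_e$ would need separate treatment via the second error equation \eqref{sehv2}, but that is beyond the scope of the present statement.
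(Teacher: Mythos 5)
Your proof is correct and follows essentially the same route as the paper: test the error equations \eqref{sehv}--\eqref{sehv2} with $(\be_h,\epsilon_h)$, add them so the $b$-terms cancel, identify the left-hand side with $\3bar\be_h\3bar^2+\3bar\epsilon_h\3bar_0^2$, and bound the right-hand side using Lemma \ref{lem2} with $s=k$. The only quibble is the phrase ``the latter with a sign flip'' --- the two equations are added exactly as written, since the $b$-terms already carry opposite signs --- but the identity you display is the correct one, and your side remarks on the regularity hypothesis and on $p=0$ are sound.
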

\begin{proof}
Letting $\bv_h=\be_h$ in (\ref{sehv}) and $q_h=\epsilon_h$ in (\ref{sehv2})  and adding the two equations, we have 
\begin{equation*}
\begin{split}
\3bar \be_h\3bar^2+\3bar \epsilon_h \3bar_0^2 &=s_1(\bQ_h\bu, \be_h)+s_2(Q_hp, \epsilon_h)+\ell_1(\bu, \be_h)-\ell_2(\bu, \epsilon_h).\\
\end{split}
\end{equation*} 
Using Lemma \ref{lem2} completes the proof of the theorem.
\end{proof}

\section{$L^2$ Error Estimates}
We consider an auxiliary problem of finding $(\bphi; \xi)$ such that
\begin{equation}\label{dual}
\begin{split}
(\nabla \times)^4 \bphi+\nabla \xi =&\be_0, \qquad \text{in}\ \Omega,\\
\nabla\cdot\bphi =& 0, \qquad\text{in} \ \Omega,\\
\bphi\times\bn=& 0, \qquad\text{on} \ \partial\Omega,\\
\nabla \times\bphi=& 0, \qquad\text{on} \ \partial\Omega,\\
\xi=& 0, \qquad\text{on} \ \partial\Omega. 
\end{split}
\end{equation}


Let ${t_0}=\min\{k, 3\}$. We assume the regularity property holds true in the sense that $\bphi$ and $\xi$ satisfy
 \begin{equation}\label{regu}
 \|\bphi\|_{t_0+1} +\|(\nabla\times)^2\bphi\|_{t_0-1}+\|\xi\|_{1}\leq C\| \be_0\|.
\end{equation}
 
 \begin{theorem}
 Let $k\geq 1$ and   ${t_0}=\min\{k, 3\}$. Suppose that  $\bu\in [H^{k+1}(\Omega)]^3$. The following estimate holds 
 \begin{equation}
      \|\be_0\|  \leq  Ch^{t_0+k-2} (\|\bu\|_{k+1}+\|(\nabla \times)^2\bu\|_{k-1}).
 \end{equation}
 In other words, we have a sub-optimal order of convergence for $k=1, 2$ and optimal order of convergence for $k\geq 3$.
 \end{theorem}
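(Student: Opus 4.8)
The plan is to use a standard duality (Aubin--Nitsche) argument based on the auxiliary problem \eqref{dual}. First I would test the first equation of \eqref{dual} against $\be_0$ (with $\be_0$ identified, through the projection, with the interior component of the error), and use the fact that $\nabla\cdot\bphi=0$ together with the second error equation to absorb the $\nabla\xi$ term. Concretely, I would write $\|\be_0\|^2=((\nabla\times)^4\bphi,\be_0)+(\nabla\xi,\be_0)$ and then integrate by parts element-by-element to move the quad-curl onto $\be_0$; the boundary terms that appear should be reorganized using the weak curl-curl definition \eqref{discurlcurl} applied to $\bQ_h\bphi$, exactly as in the derivation of the error equations in Section~\ref{Section:error-equation}, so that the leading volume term becomes $a(\bQ_h\bphi,\be_h)$ up to projection errors. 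This is the routine but bookkeeping-heavy part: one obtains an identity of the form
\begin{equation*}
\|\be_0\|^2 = a(\be_h,\bQ_h\bphi)+b(\bQ_h\bphi,\epsilon_h)-b(\be_h,Q_h\xi)+\text{(consistency terms involving }\bphi,\xi\text{)}.
\end{equation*}

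Next I would invoke the error equations \eqref{sehv}--\eqref{sehv2} themselves, now with the test functions $\bv_h=\bQ_h\bphi$ and $q_h=Q_h\xi$, to replace $a(\be_h,\bQ_h\bphi)+b(\bQ_h\bphi,\epsilon_h)$ and $-b(\be_h,Q_h\xi)+s_2(\epsilon_h,Q_h\xi)$ by the known right-hand sides $s_1(\bQ_h\bu,\bQ_h\bphi)+\ell_1(\bu,\bQ_h\bphi)$ and $s_2(Q_hp,Q_h\xi)-\ell_2(\bu,Q_h\xi)$, plus the stabilization term $-s_1(\be_h,\bQ_h\bphi)$ that has to be carried along. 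After this substitution, $\|\be_0\|^2$ is bounded by a sum of terms each of which pairs a quantity controlled by $\3bar\be_h\3bar+\3bar\epsilon_h\3bar_0$ (from Theorem~\ref{thm}) with a quantity controlled by the approximation/projection error of $\bphi,\xi$. The latter are estimated by Lemma~\ref{lem2} applied to $\bphi$ (with $s=t_0$) and by standard $L^2$-projection approximation properties together with the trace inequalities \eqref{trace-inequality}--\eqref{trace}; here the regularity assumption \eqref{regu} is used to bound $\|\bphi\|_{t_0+1}+\|(\nabla\times)^2\bphi\|_{t_0-1}+\|\xi\|_1$ by $\|\be_0\|$.

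Combining these estimates, every term on the right-hand side of the duality identity carries a factor of the form $h^{t_0-1}\|\be_0\|$ (the $\bphi$-side) times $h^{k-1}(\|\bu\|_{k+1}+\|(\nabla\times)^2\bu\|_{k-1})$ (the $\be_h$-side, via Theorem~\ref{thm}), or an analogous cross term. Dividing through by $\|\be_0\|$ then yields the claimed bound $\|\be_0\|\le Ch^{t_0+k-2}(\|\bu\|_{k+1}+\|(\nabla\times)^2\bu\|_{k-1})$.

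The main obstacle I anticipate is the careful treatment of the consistency/boundary terms generated when integrating $((\nabla\times)^4\bphi,\be_0)$ by parts: one must verify that the boundary integrals on $\partial\Omega$ vanish using the boundary conditions $\bphi\times\bn=0$ and $\nabla\times\bphi=0$ in \eqref{dual}, and that the remaining interelement terms combine exactly into $a(\be_h,\bQ_h\bphi)$ plus a term of the same structure as $\ell_1$ but with the roles of $\bu$ and $\bphi$ interchanged — so that it can be estimated by a variant of \eqref{error2} with $\bphi$ in place of $\bu$ and $\be_h$ in place of $\bv$. A secondary subtlety is that, because only $t_0=\min\{k,3\}$ regularity is assumed for the dual solution, the gain from duality saturates at $h^3$ when $k\ge 3$, which is precisely why the order degrades to sub-optimal for $k=1,2$ (where $t_0+k-2=k-1<k$) and becomes optimal only for $k\ge 3$; this has to be tracked consistently when applying Lemma~\ref{lem2} with $s=t_0$ rather than $s=k$.
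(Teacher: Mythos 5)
Your proposal is correct and follows essentially the same route as the paper: the duality argument with the auxiliary problem \eqref{dual}, elementwise integration by parts reorganized through the weak curl-curl identity for $\bQ_h\bphi$, substitution of the error equations \eqref{sehv}--\eqref{sehv2} with test functions $\bQ_h\bphi$ and $Q_h\xi$, and estimation of each resulting term via Lemma~\ref{lem2} with $s=t_0$ together with the regularity assumption \eqref{regu} and Theorem~\ref{thm}. The only discrepancy is an inconsequential sign on the $b(\be_h,Q_h\xi)$ term in your intermediate identity, which does not affect the final bound.
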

    
 \begin{proof}
 Using the usual integration by parts,  letting $\bu=\bphi$ and $\bv_h=\be_h$ in \eqref{eq1},  letting  $\bv_h=\bQ_h\bphi$ in \eqref{sehv},   letting  $\bu=\bphi$ and $q_h=\epsilon_h$ in \eqref{beq}, letting $q_h=Q_h\xi$ in \eqref{sehv2} and \eqref{l-2}, we have  
 \begin{equation}\label{dq}
     \begin{split}
& \|\be_0\|^2\\=&\sum_{T\in {\cal T}_h}((\nabla \times)^4 \bphi+\nabla \xi, \be_0)_T\\
 =&\sum_{T\in {\cal T}_h} ((\nabla \times)^2\bphi, (\nabla \times)^2\be_0)_T+\langle \nabla\times\be_0, \bn\times (\nabla \times)^2\bphi\rangle_{\partial T}\\
 &+\langle \be_0,\bn\times(\nabla \times)^3\bphi\rangle_{\partial T} +(\be_0, {\cal Q}^{k}_h \nabla  \xi)_T\\
=&\sum_{T\in {\cal T}_h}((\nabla\times)^2_w\bQ_h\bphi,  (\nabla\times)^2_w \be_h)_T  -\langle (\be_0-\be_b)\times\bn, \nabla\times{\cal Q}_h^{k-1} ((\nabla\times)^2\bphi)\rangle_{\partial T}\\&-\langle (\nabla \times \be_0-\be_n)\times\bn, {\cal Q}_h^{k-1} ((\nabla\times)^2\bphi)\rangle_{\partial T}
\\& +\langle \nabla\times\be_0, \bn\times (\nabla \times)^2\bphi\rangle_{\partial T}+\langle \be_0,\bn\times(\nabla \times)^3\bphi\rangle_{\partial T}  +(\nabla_w Q_h\xi, \be_0)_T 
\\=&-s_1(\be_h, \bQ_h\bphi)-b(\bQ_h\bphi, \epsilon_h)  + s_1(\bQ_h\bu, \bQ_h\bphi) +\ell_1(\bu, \bQ_h\bphi)  \\
&+\sum_{T\in {\cal T}_h}-\langle (\be_0-\be_b)\times\bn, \nabla\times({\cal Q}_h^{k-1}-I) ((\nabla\times)^2\bphi)\rangle_{\partial T}\\&-\langle (\nabla \times \be_0-\be_n)\times\bn,( {\cal Q}_h^{k-1}-I) ((\nabla\times)^2\bphi)\rangle_{\partial T} \\
&+
 s_2(\epsilon_h, Q_h\xi)-s_2(Q_hp, Q_h\xi)+\ell_2(\bu, Q_h\xi) 
\\=&-s_1(\be_h, \bQ_h\bphi)  + s_1(\bQ_h\bu, \bQ_h\bphi) +\ell_1(\bu, \bQ_h\bphi)  \\
&+\sum_{T\in {\cal T}_h}  \langle \epsilon_0-\epsilon_b, (\bQ_0-I)\bphi\cdot\bn\rangle_{\pT}\\&
-\langle (\be_0-\be_b)\times\bn, \nabla\times({\cal Q}_h^{k-1}-I) ((\nabla\times)^2\bphi)\rangle_{\partial T} \\&-\langle (\nabla \times \be_0-\be_n)\times\bn,( {\cal Q}_h^{k-1}-I) ((\nabla\times)^2\bphi)\rangle_{\partial T} \\
&+
 s_2(\epsilon_h, Q_h\xi)-s_2(Q_hp, Q_h\xi)+\ell_2(\bu, Q_h\xi),
\end{split}
\end{equation} 
where we used  $\be_b\times\bn=0$ and  $\be_n\times\bn=0$ on $\partial\Omega$.


Next, we shall estimate the terms on the last line of \eqref{dq} one by one.

 Recall that $t_0=\min\{k, 3\}$. Using \eqref{error1} with $\bv=\be_h$ and $\bu=\bphi$, we have
\begin{equation}\label{q1}
   |s_1(\be_h, \bQ_h\bphi)| \leq  Ch^{{t_0}-1} \|\bphi\|_{{t_0}+1} s_1(\be_h, \be_h)^{\frac{1}{2}}\leq Ch^{{t_0}-1} \|\bphi\|_{{t_0}+1} \3bar \be_h\3bar. 
\end{equation}
 
Using \eqref{error1} with $\bv=\bQ_h\bphi$, we have
\begin{equation}\label{q2}
  |s_1(\bQ_h\bu, \bQ_h\bphi)| \leq Ch^{k-1} \|\bu\|_{k+1} s_1(\bQ_h\bphi, \bQ_h\bphi)^{\frac{1}{2}}.
\end{equation}
Note that 
\begin{equation}\label{s1}
\begin{split}
  &s_1(\bQ_h\bphi, \bQ_h\bphi)^{\frac{1}{2}}\\ \leq& C\sum_{T\in {\cal T}_h} h_T^{-3}\| \bQ_0\bphi\times\bn-\bQ_b\bphi \times\bn\|_{\partial T}^2\\&+ h_T^{-1}\| \nabla\times \bQ_0\bphi\times\bn-\bQ_n(\nabla \times\bphi) \times\bn\|_{\partial T}^2\\
   \leq& C\sum_{T\in {\cal T}_h} h_T^{-4}\| \bQ_0\bphi - \bphi \|_{T}^2+h_T^{-2}\| \bQ_0\bphi - \bphi \|_{1,T}^2\\&+ h_T^{-2}\|   \bQ_0\bphi -  \bphi \|_{1, T}^2+ \|   \bQ_0\bphi -  \bphi \|_{2, T}^2\\
  \leq & Ch^{{t_0}-1}\|\bphi\|_{{t_0}+1}.
\end{split} 
\end{equation}
where we used trace inequality \eqref{trace-inequality}. Substituting \eqref{s1} into \eqref{q2} gives
\begin{equation}\label{q2_2}
    |s_1(\bQ_h\bu, \bQ_h\bphi)| \leq Ch^{k-1} \|\bu\|_{k+1}  h^{{t_0}-1}\|\bphi\|_{{t_0}+1}.
\end{equation}

Using \eqref{error2} with $\bv=\bQ_h\bphi$ and \eqref{s1}, we have
 \begin{equation}\label{q3}
    |\ell_1(\bu, \bQ_h\bphi)| \leq     Ch^{k-1}\|(\nabla\times)^2\bu\|_{k-1}s_1(\bQ_h\bphi, \bQ_h\bphi)^{\frac{1}{2}}\leq     Ch^{k-1}\|(\nabla\times)^2\bu\|_{k-1} h^{{t_0}-1}\|\bphi\|_{{t_0}+1}. 
\end{equation}

Using \eqref{error3} with $\bu=\bphi$ and $q_h=\epsilon_h$, we have
\begin{equation}\label{q4}
   |\sum_{T\in {\cal T}_h}   \langle \epsilon_0-\epsilon_b, (\bQ_0-I)\bphi\cdot\bn\rangle_{\pT} | \leq  Ch^{{t_0-1}}\|\bphi\|_{{t_0}+1}\3bar \epsilon_h \3bar_0,
\end{equation}

 Using \eqref{error2} with $\bu=\bphi$ and $\bv=\be_h$, we have
 \begin{equation}\label{q5}
 \begin{split}
  &   |\sum_{T\in {\cal T}_h}  \langle (\be_0-\be_b)\times\bn, \nabla\times({\cal Q}_h^{k-1}-I) ((\nabla\times)^2\bphi)\rangle_{\partial T}\\&+\langle (\nabla \times \be_0-\be_n)\times\bn,( {\cal Q}_h^{k-1}-I) ((\nabla\times)^2\bphi)\rangle_{\partial T}|\\
   \leq &    Ch^{{t_0}-1}\|(\nabla\times)^2 \bphi\|_{{t_0}-1}s_1(\be_h, \be_h)^{\frac{1}{2}} 
    \leq      Ch^{{t_0}-1}\|(\nabla\times)^2 \bphi\|_{{t_0}-1}\3bar \be_h\3bar. 
 \end{split}
 \end{equation}


Using the Cauchy-Schwartz inequality and trace inequality \eqref{trace-inequality}, we have
\begin{equation}
    \begin{split}   
 s_2(\epsilon_h, Q_h\xi) =&\sum_{T\in {\cal T}_h} h_{T}^3 \langle \epsilon_0-\epsilon_b, Q_0\xi-Q_b\xi\rangle_{\partial T} \\
 \leq &C\3bar \epsilon_h\3bar_0 \Big(\sum_{T\in {\cal T}_h} h_{T}^3 \|Q_0\xi- \xi\|_{\partial T}^2\big)^{\frac{1}{2}}\\
  \leq &C\3bar \epsilon_h\3bar_0 \Big(\sum_{T\in {\cal T}_h}h_T^{2}  \|Q_0\xi- \xi\|_{  T}^2+ h_T^{4}\|Q_0\xi- \xi\|_{1,  T}^2\big)^{\frac{1}{2}}\\
  \leq &Ch^{2}\|\xi\|_{1}\3bar \epsilon_h\3bar_0.
     \end{split}
\end{equation}
 
 Using \eqref{error4} with $q=Q_h\xi$, we have
 \begin{equation}
     s_2(Q_hp, Q_h\xi)=0.
 \end{equation}

 Using \eqref{error3} with $q=Q_h\xi$ and the trace inequality \eqref{trace-inequality},  we have
 \begin{equation}\label{last}
     \begin{split}
         \ell_2(\bu, Q_h\xi)&\leq Ch^{k-1}\|\bu\|_{k+1} \3bar Q_h\xi \3bar_0\\
         &\leq Ch^{k-1}\|\bu\|_{k+1}  \big(\sum_{T\in {\cal T}_h}h_T^3 \|Q_0\xi-Q_b\xi\|_{\partial T}^2\big)^{\frac{1}{2}}\\
           &\leq Ch^{k-1}\|\bu\|_{k+1}  \big(\sum_{T\in {\cal T}_h}h_T ^3\|Q_0\xi- \xi\|_{\partial T}^2\big)^{\frac{1}{2}}  \\
           &\leq Ch^{k-1}\|\bu\|_{k+1}  \big(\sum_{T\in {\cal T}_h}h_T^{2}\|Q_0\xi- \xi\|_{ T}^2+h_T ^4\|Q_0\xi- \xi\|_{1, T}^2\big)^{\frac{1}{2}}\\
            &\leq Ch^{k-1}\|\bu\|_{k+1}  h^{2}\|\xi\|_{1}.
     \end{split}
    \end{equation}
    
      
      Substituting \eqref{q1}-\eqref{last} into 
      \eqref{dq} and using the regularity assumption \eqref{regu}  and the error estimate \eqref{estimate1}   gives
      \begin{equation*}
          \begin{split}
              \|\be_0\|^2\leq &C h^{{t_0}-1} \|\bphi\|_{{t_0}+1} \3bar \be_h\3bar+ C h^{k-1}\|\bu\|_{k+1}h^{{t_0}-1} \|\bphi\|_{{t_0}+1}\\
              &+ C h^{k-1}\|(\nabla\times)^2\bu\|_{k-1}h^{{t_0}-1} \|\bphi\|_{{t_0}+1}+C h^{{t_0}-1} \|\bphi\|_{{t_0}+1} \3bar \epsilon_h\3bar_0\\
&+              Ch^{{t_0}-1}\|(\nabla\times)^2 \bphi\|_{{t_0-1}}\3bar \be_h\3bar+ 
Ch^{2}\|\xi\|_{1}\3bar \epsilon_h\3bar_0 \\&+Ch^{k-1}\|\bu\|_{k+1}  h^{2}\|\xi\|_{1},
          \end{split}
      \end{equation*}
      which yields
      \begin{equation*}
          \begin{split}
        \|\be_0\|^2 \leq  Ch^{t_0+k-2} (\|\bu\|_{k+1}+\|(\nabla \times)^2\bu\|_{k-1})\|\be_0\|.
          \end{split}
      \end{equation*}   
      This completes the proof of the theorem.
 \end{proof}

\section{Numerical tests}
In this section, we present some numerical results for the WG finite
element method for solving the quad-curl problem  analyzed in the  previous sections. To this end, we shall solve the following quad-curl problem with non-homogeneous boundary conditions on an unit cube domain $\Omega=(0,1)^3$: Find an known $\bu$ such that 
\begin{equation}\label{s-1} 
\begin{split}
 (\nabla \times)^4 \bu=&\bf, \qquad \text{in}\quad \Omega,\\
 \nabla\cdot\bu=&0, \qquad \text{in}\quad \Omega,\\
 \bu\times\bn=&{\mathbf g}_1, \qquad \text{on}\quad \partial\Omega,\\
\nabla\times\bu\times\bn=&{\mathbf g}_2, \qquad \text{on}\quad \partial\Omega,
 \end{split}
\end{equation} where $\bf$, ${\mathbf g}_1$ and ${\mathbf g}_2$ are calculated by the exact solution 
\begin{align*} \bu = \begin{pmatrix} -2 x^2 y^2 z \\
                                                       2 x^2 y^3 z \\
                                                     -x y^2 z^2 (3 x - 2) \end{pmatrix}.
\end{align*} 


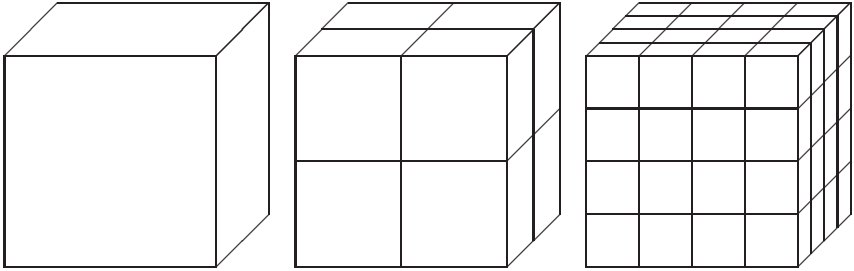
\begin{figure}[ht]
\begin{center}
 \setlength\unitlength{1pt}
    \begin{picture}(320,118)(0,3)
    \put(0,0){\begin{picture}(110,110)(0,0)
       \multiput(0,0)(80,0){2}{\line(0,1){80}}  \multiput(0,0)(0,80){2}{\line(1,0){80}}
       \multiput(0,80)(80,0){2}{\line(1,1){20}} \multiput(0,80)(20,20){2}{\line(1,0){80}}
       \multiput(80,0)(0,80){2}{\line(1,1){20}}  \multiput(80,0)(20,20){2}{\line(0,1){80}}
      \end{picture}}
    \put(110,0){\begin{picture}(110,110)(0,0)
       \multiput(0,0)(40,0){3}{\line(0,1){80}}  \multiput(0,0)(0,40){3}{\line(1,0){80}}
       \multiput(0,80)(40,0){3}{\line(1,1){20}} \multiput(0,80)(10,10){3}{\line(1,0){80}}
       \multiput(80,0)(0,40){3}{\line(1,1){20}}  \multiput(80,0)(10,10){3}{\line(0,1){80}}
      \end{picture}}
    \put(220,0){\begin{picture}(110,110)(0,0)
       \multiput(0,0)(20,0){5}{\line(0,1){80}}  \multiput(0,0)(0,20){5}{\line(1,0){80}}
       \multiput(0,80)(20,0){5}{\line(1,1){20}} \multiput(0,80)(5,5){5}{\line(1,0){80}}
       \multiput(80,0)(0,20){5}{\line(1,1){20}}  \multiput(80,0)(5,5){5}{\line(0,1){80}}

      \end{picture}}

    \end{picture}
    \end{center}
\caption{  The first three levels of uniform cubic grids used in Table \ref{t-1}. }
\label{g-1}
\end{figure}

We first compute the solution of \eqref{s-1} by the $P_k$ weak Galerkin finite element method \eqref{32}-\eqref{2} on  uniform cubic grids shown in
   Figure \ref{g-1}.
For simplicity of notations, we denote the WG finite element solution $(\bu_h; p_h)$  by $\{P_k, P_k, P_{k-1}\}$-$P_{k-1}$ with $\{P_k, P_k\}$-$P_k$.
In Table \ref{t-1},  we list the errors in various norms and the computed orders of convergence for $P_2$, $P_3$, $P_4$ and $P_5$
  finite element solutions on uniform cubic grids. It seems we do have one order superconvergence in most cases in Table \ref{t-1}.

\begin{table}[ht]
  \centering   \renewcommand{\arraystretch}{1.05}
  \caption{ Error profiles and convergence rates on uniform cubic grids shown in Figure \ref{g-1} for \eqref{s-1}. }
\label{t-1}
\begin{tabular}{c|cc|cc|cc}
\hline
level & $\|\bQ_h  \bu-  \bu_h \| $  &rate &  $\3bar \bQ_h \bu- \bu_h \3bar $ &rate & $\|  p_h \| $  &rate  \\
\hline
 &\multicolumn{6}{c}{by the $\{P_2, P_2, P_1\}$-$P_1$ with $\{P_2, P_2\}$-$P_2$ WG method} \\ \hline
 2&  0.5263E+00&  3.8&  0.4078E+01&  1.7&  0.1742E-01&  3.9 \\
 3&  0.3345E-01&  4.0&  0.1237E+01&  1.7&  0.1947E-02&  3.2 \\
 4&  0.2151E-02&  4.0&  0.3739E+00&  1.7&  0.2231E-03&  3.1 \\
\hline
 &\multicolumn{6}{c}{by the $\{P_3, P_3, P_2\}$-$P_2$ with $\{P_3, P_3\}$-$P_3$ WG method} \\ \hline
 2&  0.8522E-01&  5.3&  0.1512E+01&  2.2&  0.1117E-01&  4.0 \\
 3&  0.2190E-02&  5.3&  0.2365E+00&  2.7&  0.4298E-03&  4.7 \\
 4&  0.6360E-04&  5.1&  0.3444E-01&  2.8&  0.1841E-04&  4.5 \\
 \hline
 &\multicolumn{6}{c}{by the $\{P_4, P_4, P_3\}$-$P_3$ with $\{P_4, P_4\}$-$P_4$ WG method} \\ \hline
 2&  0.9472E-02&  6.3&  0.4145E+00&  3.4&  0.1476E-02&  4.2 \\
 3&  0.1841E-03&  5.7&  0.3102E-01&  3.7&  0.1760E-04&  6.4 \\
 4&  0.2992E-05&  5.9&  0.2162E-02&  3.8&  0.3430E-06&  5.7 \\
\hline
 &\multicolumn{6}{c}{by the $\{P_5, P_5, P_4\}$-$P_4$  with $\{P_5, P_5\}$-$P_5$ WG method} \\ \hline
 1&  0.1010E-02&  0.0&  0.5724E-01&  0.0&  0.3493E-03&  0.0 \\
 2&  0.1684E-04&  5.9&  0.3565E-02&  4.0&  0.5073E-05&  6.1 \\
 3&  0.2580E-06&  6.0&  0.2204E-03&  4.0&  0.9438E-07&  5.7 \\
 \hline
\end{tabular}%
\end{table}%

Next we compute the solution of \eqref{s-1} again by the $P_k$ weak Galerkin finite element method but on uniform tetrahedral grids shown in
   Figure \ref{g-2}. 
In Table \ref{t-2},  we list the errors in various norms and the computed orders of convergence for $P_2$, $P_3$ and $P_4$ 
  finite element solutions on uniform tetrahedral grids. It seems we do have one order superconvergence in most cases in Table \ref{t-2}.

\begin{figure}[ht]
\begin{center}
 \setlength\unitlength{1pt}
    \begin{picture}(320,118)(0,3)
    \put(0,0){\begin{picture}(110,110)(0,0)
       \multiput(0,0)(80,0){2}{\line(0,1){80}}  \multiput(0,0)(0,80){2}{\line(1,0){80}}
       \multiput(0,80)(80,0){2}{\line(1,1){20}} \multiput(0,80)(20,20){2}{\line(1,0){80}}
       \multiput(80,0)(0,80){2}{\line(1,1){20}}  \multiput(80,0)(20,20){2}{\line(0,1){80}}
    \put(80,0){\line(-1,1){80}} \put(80,0){\line(1,5){20}}\put(80,80){\line(-3,1){60}}
      \end{picture}}
    \put(110,0){\begin{picture}(110,110)(0,0)
       \multiput(0,0)(40,0){3}{\line(0,1){80}}  \multiput(0,0)(0,40){3}{\line(1,0){80}}
       \multiput(0,80)(40,0){3}{\line(1,1){20}} \multiput(0,80)(10,10){3}{\line(1,0){80}}
       \multiput(80,0)(0,40){3}{\line(1,1){20}}  \multiput(80,0)(10,10){3}{\line(0,1){80}}
    \put(80,0){\line(-1,1){80}}  \put(80,0){\line(1,5){20}}\put(80,80){\line(-3,1){60}}
       \multiput(40,0)(40,40){2}{\line(-1,1){40}}
         \multiput(80,40)(10,-30){2}{\line(1,5){10}}
        \multiput(40,80)(50,10){2}{\line(-3,1){30}}
      \end{picture}}
    \put(220,0){\begin{picture}(110,110)(0,0)
       \multiput(0,0)(20,0){5}{\line(0,1){80}}  \multiput(0,0)(0,20){5}{\line(1,0){80}}
       \multiput(0,80)(20,0){5}{\line(1,1){20}} \multiput(0,80)(5,5){5}{\line(1,0){80}}
       \multiput(80,0)(0,20){5}{\line(1,1){20}}  \multiput(80,0)(5,5){5}{\line(0,1){80}}
    \put(80,0){\line(-1,1){80}}  \put(80,0){\line(1,5){20}}\put(80,80){\line(-3,1){60}}
       \multiput(40,0)(40,40){2}{\line(-1,1){40}}
       \multiput(80,40)(10,-30){2}{\line(1,5){10}}
        \multiput(40,80)(50,10){2}{\line(-3,1){30}}

       \multiput(20,0)(60,60){2}{\line(-1,1){20}}   \multiput(60,0)(20,20){2}{\line(-1,1){60}}
         \multiput(80,60)(15,-45){2}{\line(1,5){5}} \multiput(80,20)(5,-15){2}{\line(1,5){15}}
          \multiput(20,80)(75,15){2}{\line(-3,1){15}}\multiput(60,80)(25,5){2}{\line(-3,1){45}}
      \end{picture}}

    \end{picture}
    \end{center}
\caption{  The first three levels of uniform tetrahedral grids used in Table \ref{t-2}. }
\label{g-2}
\end{figure}
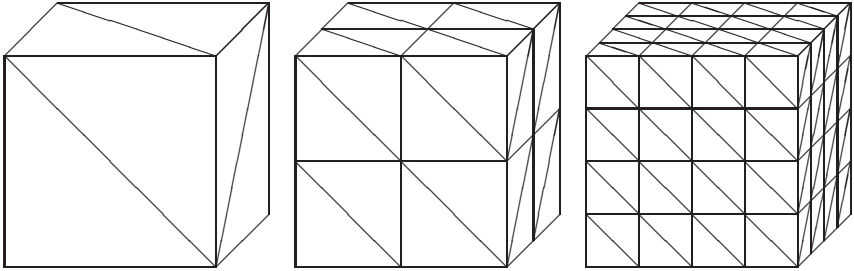

\begin{table}[ht]
  \centering   \renewcommand{\arraystretch}{1.05}
  \caption{ Error profiles and convergence rates on tetrahedral grids shown in Figure \ref{g-2} for \eqref{s-1}. }
\label{t-2}
\begin{tabular}{c|cc|cc|cc}
\hline
level & $\|\bQ_h  \bu-  \bu_h \| $  &rate &  $\3bar \bQ_h \bu- \bu_h \3bar $ &rate & $\|  p_h \| $  &rate  \\
\hline
 &\multicolumn{6}{c}{by the $\{P_2, P_2, P_1\}$-$P_1$ with $\{P_2, P_2\}$-$P_2$ WG method} \\ \hline
 2&   0.444E+00&3.9&   0.357E+01&1.8&   0.288E-01&3.8 \\
 3&   0.279E-01&4.0&   0.117E+01&1.6&   0.193E-02&3.9 \\
 4&   0.172E-02&4.0&   0.451E+00&1.4&   0.304E-03&2.7 \\
\hline
 &\multicolumn{6}{c}{by the $\{P_3, P_3, P_2\}$-$P_2$ with $\{P_3, P_3\}$-$P_3$ WG method} \\ \hline
 1&   0.187E+01&0.0&   0.480E+01&0.0&   0.181E+00&0.0 \\
 2&   0.551E-01&5.1&   0.724E+00&2.7&   0.764E-02&4.6 \\
 3&   0.163E-02&5.1&   0.128E+00&2.5&   0.257E-03&4.9 \\
 \hline
 &\multicolumn{6}{c}{by the $\{P_4, P_4, P_3\}$-$P_3$ with $\{P_4, P_4\}$-$P_4$ WG method} \\ \hline
 1&   0.265E+00&0.0&   0.888E+00&0.0&   0.308E-01&0.0 \\
 2&   0.411E-02&6.0&   0.738E-01&3.6&   0.632E-03&5.6 \\
 3&   0.693E-04&5.9&   0.721E-02&3.4&   0.139E-04&5.5 \\
\hline 
\end{tabular}%
\end{table}%

\long\def\myskip#1{}
\myskip{
\begin{verbatim}
 u0 L2 | u a |w l2 |id,ir            2           0
 1&  0.7541E+01&  0.0&  0.1336E+02&  0.0&  0.2588E+00&  0.0 \\
 2&  0.5316E+00&  3.8&  0.4103E+01&  1.7&  0.2746E-01&  3.2 \\
 3&  0.3379E-01&  4.0&  0.1241E+01&  1.7&  0.8302E-02&  1.7 \\
 4&  0.2188E-02&  3.9&  0.3744E+00&  1.7&  0.1795E-02&  2.2 \\
 5&  0.1508E-03&  3.9&  0.1154E+00&  1.7&  0.4554E-03&  2.0 \\


  u0 L2 | u a |w l2 |id,ir            2           0    (original s1*h wg*h**2)  s1*h wg*h**0 
 1&  0.7541E+01&  0.0&  0.1336E+02&  0.0&  0.2588E+00&  0.0 \\
 2&  0.5130E+00&  3.9&  0.4020E+01&  1.7&  0.4319E-01&  2.6 \\
 3&  0.3192E-01&  4.0&  0.1212E+01&  1.7&  0.5772E-02&  2.9 \\
 4&  0.2019E-02&  4.0&  0.3655E+00&  1.7&  0.7854E-03&  2.9 \\

  u0 L2 | u a |w l2 |id,ir            2           0      (original s1*h wg*h**2)  s1/h wg*h**0
 1&  0.7541E+01&  0.0&  0.1336E+02&  0.0&  0.2588E+00&  0.0 \\
 2&  0.5263E+00&  3.8&  0.4078E+01&  1.7&  0.1742E-01&  3.9 \\
 3&  0.3345E-01&  4.0&  0.1237E+01&  1.7&  0.1947E-02&  3.2 \\
 4&  0.2151E-02&  4.0&  0.3739E+00&  1.7&  0.2231E-03&  3.1 \\


  u0 L2 | u a |w l2 |id,ir            3           0
 1&  0.3473E+01&  0.0&  0.6987E+01&  0.0&  0.3286E+00&  0.0 \\
 2&  0.8914E-01&  5.3&  0.1492E+01&  2.2&  0.4130E-01&  3.0 \\
 3&  0.3030E-02&  4.9&  0.2343E+00&  2.7&  0.2288E-02&  4.2 \\
 4&  0.1579E-03&  4.3&  0.3418E-01&  2.8&  0.1169E-03&  4.3 \\

  u0 L2 | u a |w l2 |id,ir            3           0  (original s1*h wg*h**2)  s1*h**-1 wg*h**0  
 1&  0.3451E+01&  0.0&  0.7142E+01&  0.0&  0.1809E+00&  0.0 \\
 2&  0.8522E-01&  5.3&  0.1512E+01&  2.2&  0.1117E-01&  4.0 \\
 3&  0.2190E-02&  5.3&  0.2365E+00&  2.7&  0.4298E-03&  4.7 \\
 4&  0.6360E-04&  5.1&  0.3444E-01&  2.8&  0.1841E-04&  4.5 \\

  u0 L2 | u a |w l2 |id,ir            4           0
 1&  0.6412E+00&  0.0&  0.4487E+01&  0.0&  0.3939E-01&  0.0 \\
 2&  0.7991E-02&  6.3&  0.4145E+00&  3.4&  0.3765E-02&  3.4 \\
 3&  0.1110E-03&  6.2&  0.3104E-01&  3.7&  0.6406E-04&  5.9 \\
 4&  0.1579E-05&  6.1&  0.2163E-02&  3.8&  0.2198E-05&  4.9 \\


  u0 L2 | u a |w l2 |id,ir            4           0  (original s1*h wg*h**2)  s1*h**1 wg*h**0
 1&  0.7502E+00&  0.0&  0.4496E+01&  0.0&  0.2665E-01&  0.0 \\
 2&  0.8880E-02&  6.4&  0.4139E+00&  3.4&  0.2965E-02&  3.2 \\
 3&  0.1678E-03&  5.7&  0.3094E-01&  3.7&  0.9880E-04&  4.9 \\
 4&  0.2876E-05&  5.9&  0.2156E-02&  3.8&  0.4621E-05&  4.4 \\


  u0 L2 | u a |w l2 |id,ir            4           0  (original s1*h wg*h**2)  s1*h**-1 wg*h**0
 1&  0.7502E+00&  0.0&  0.4496E+01&  0.0&  0.2665E-01&  0.0 \\
 2&  0.9472E-02&  6.3&  0.4145E+00&  3.4&  0.1476E-02&  4.2 \\
 3&  0.1841E-03&  5.7&  0.3102E-01&  3.7&  0.1760E-04&  6.4 \\
 4&  0.2992E-05&  5.9&  0.2162E-02&  3.8&  0.3430E-06&  5.7 \\


  u0 L2 | u a |w l2 |id,ir            5           0
 1&  0.1010E-02&  0.0&  0.5724E-01&  0.0&  0.3493E-03&  0.0 \\
 2&  0.1824E-04&  5.8&  0.3566E-02&  4.0&  0.1333E-04&  4.7 \\
 3&  0.2938E-06&  6.0&  0.2205E-03&  4.0&  0.9786E-06&  3.8 \\


  u0 L2 | u a |w l2 |id,ir            5    (original s1*h wg*h**2)  s1*h**-1 wg*h**0
 1&  0.1010E-02&  0.0&  0.5724E-01&  0.0&  0.3493E-03&  0.0 \\
 2&  0.1684E-04&  5.9&  0.3565E-02&  4.0&  0.5073E-05&  6.1 \\

  u0 L2 | u a |w l2 |id,ir            5           0 s1*h**-1 wg*h**0
 1&  0.1010E-02&  0.0&  0.5724E-01&  0.0&  0.3493E-03&  0.0 \\
 2&  0.1684E-04&  5.9&  0.3565E-02&  4.0&  0.5073E-05&  6.1 \\
 3&  0.2580E-06&  6.0&  0.2204E-03&  4.0&  0.9438E-07&  5.7 \\
 

----- tri below.
  u0 L2 | u a |w l2 |id            2
 1&   0.648E+01&0.0&   0.123E+02&0.0&   0.414E+00&0.0 \\
 2&   0.418E+00&4.0&   0.346E+01&1.8&   0.301E-01&3.8 \\
 3&   0.248E-01&4.1&   0.113E+01&1.6&   0.206E-02&3.9 \\
 4&   0.147E-02&4.1&   0.442E+00&1.4&   0.146E-03&3.8 \\

   (original s1*h wg*h**2)  s1/h wg*h**0
                                              
  u0 L2 | u a |w l2 |id             2 s1/h wg*h**0
 1&   0.648E+01&0.0&   0.123E+02&0.0&   0.414E+00&0.0 \\
 2&   0.444E+00&3.9&   0.357E+01&1.8&   0.288E-01&3.8 \\
 3&   0.279E-01&4.0&   0.117E+01&1.6&   0.193E-02&3.9 \\
  u0 L2 | u a |w l2 |id             2
 1&   0.648E+01&0.0&   0.123E+02&0.0&   0.414E+00&0.0 \\
 2&   0.444E+00&3.9&   0.357E+01&1.8&   0.288E-01&3.8 \\
 3&   0.279E-01&4.0&   0.117E+01&1.6&   0.193E-02&3.9 \\
 4&   0.172E-02&4.0&   0.451E+00&1.4&   0.304E-03&2.7 \\

  u0 L2 | u a |w l2 |id             2 s1*h wg* h**2;  (standard)
 1&   0.000E+00&0.0&   0.000E+00&0.0&   0.000E+00&0.0 \\
 2&   0.456E+00&0.0&   0.363E+01&0.0&   0.549E-01&0.0 \\
 3&   0.287E-01&4.0&   0.117E+01&1.6&   0.147E-01&1.9 \\


  u0 L2 | u a |w l2 |id             2 
 1&   0.648E+01&0.0&   0.123E+02&0.0&   0.414E+00&0.0 \\
 2&   0.456E+00&3.8&   0.363E+01&1.8&   0.137E-01&4.9 \\
 3&   0.287E-01&4.0&   0.117E+01&1.6&   0.921E-03&3.9 \\

  u0 L2 | u a |w l2 |id             3
 1&   0.187E+01&0.0&   0.480E+01&0.0&   0.181E+00&0.0 \\
 2&   0.551E-01&5.1&   0.724E+00&2.7&   0.764E-02&4.6 \\
 3&   0.163E-02&5.1&   0.128E+00&2.5&   0.257E-03&4.9 \\


  u0 L2 | u a |w l2 |id,ir            3
 1&   0.187E+01&0.0&   0.480E+01&0.0&   0.181E+00&0.0 \\
 2&   0.631E-01&4.9&   0.765E+00&2.6&   0.747E-02&4.6 \\
 3&   0.195E-02&5.0&   0.133E+00&2.5&   0.133E-02&2.5 \\
 4&   0.605E-04&5.0&   0.269E-01&2.3&   0.474E-03&1.5 \\


  u0 L2 | u a |w l2 |id             4
 1&   0.265E+00&0.0&   0.888E+00&0.0&   0.308E-01&0.0 \\
 2&   0.377E-02&6.1&   0.718E-01&3.6&   0.645E-03&5.6 \\
 3&   0.590E-04&6.0&   0.704E-02&3.4&   0.127E-04&5.7 \\


  u0 L2 | u a |w l2 |id             4
 1&   0.265E+00&0.0&   0.888E+00&0.0&   0.308E-01&0.0 \\
 2&   0.411E-02&6.0&   0.738E-01&3.6&   0.632E-03&5.6 \\
 3&   0.693E-04&5.9&   0.721E-02&3.4&   0.139E-04&5.5 \\

\end{verbatim}
}

\bibliographystyle{abbrv}
\bibliography{Ref}

\end{document}